\documentclass[12pt]{article}
\usepackage[margin=35mm]{geometry}
\usepackage{amsmath}
\usepackage{amsfonts}
\usepackage{amssymb}
\usepackage{amsthm}
\usepackage{graphicx}
\usepackage{upquote}
\usepackage{verbatim}
\usepackage{tikz}
\immediate\write18{texcount -tex -sum  \jobname.tex > \jobname.wordcount.tex}
\usepackage{xcolor}
\usepackage[colorlinks=true,
            linkcolor=blue!100!black,   
            citecolor=blue!100!black,  
            urlcolor=blue!100!black]{hyperref}
\newtheorem{theorem}{Theorem}
\newtheorem{lemma}{Lemma}
\newtheorem{observation}{Observation}

\providecommand{\keywords}[1]
{
  \small	
  \textbf{\textit{Keywords---}} #1
}

\title{Ramsey numbers $R(K_{1,4},K_{m,n})$ for small $m$}
\author{Khaidar Abdullin$^{1}$, Dmitrii Taletskii$^{1}$\\
\small
$^1$ National Research University Higher School of Economics, \\
\small
Bolshaya Pechyorskaya ul. 25/12, Nizhny Novgorod, 603155 Russia\\%
    \small
    e-mail: abdullinhaidar@gmail.com, dmitalmail@gmail.com
}
\date{} 

\begin{document}
\maketitle

\begin{abstract}
We determine all currently unknown values of the generalized two-color Ramsey number $R(K_{1,4},K_{m,n})$ for $\min(m,n) \leq 8$. We also prove that the number $R(K_{1,4},K_{m,m})$ equals $26, 29, 31, 33$ for $m = 9, 10, 11, 12$ respectively.  
\end{abstract} 

\keywords{Ramsey number, star, complete bipartite graph}

\section{Introduction}
\label{sec:introduction}

For finite simple graphs $G$ and $H$, the generalized Ramsey number $R(G,H)$ is the smallest integer $N$ such that every $N$-vertex graph contains $G$ or its complement contains $H$. The numbers $R(G,H)$ have been extensively studied; see the survey~\cite{Radz}. We focus on finding the exact values of the numbers $R(K_{1,s},K_{m,n})$ --- the Ramsey numbers of stars versus complete bipartite graphs. Throughout the paper, we assume that $m \leq n$, using the symmetry of $K_{m,n}$. While the problem is simple for $s \leq 3$ or $m = 1$, it becomes more difficult for $s \geq 4$ and $m \geq 2$. For example, no general formula is known for the numbers $R(K_{1,s},K_{2,2})$, and the exact values are known only for some special cases~\cite{Par}--\cite{Boza}. 

In 1983, Burr et al.~\cite{BFRS} proved that, for fixed positive integers $s \geq 4$, \mbox{$m_1,\ldots,m_t \geq 2$} and all sufficiently large integers $n$,  

\begin{equation}\label{f1}
R(K_{1,s},K_{m_1,\ldots,m_t,n}) = n + (s-2) \sum_{i = 1}^t m_i + t + \epsilon,
\end{equation}
 where $\epsilon = 0$ if $s$ is even and $n + t$ is odd, and $\epsilon = 1$ otherwise. In 1990, Stevens~\cite{Stev} improved the sufficient threshold. In particular, the equality

\begin{equation}\label{f2}
R(K_{1,4},K_{m,n}) = n + 2m	 + 1 + (n \bmod 2)
\end{equation} 
holds for $n > m  = 2$ and for every fixed $m \geq 3$ and $ n \geq 3 \cdot 2^{m+1} - 2m - 3$. When $m$ and $n$ are sufficiently close, the formula~(\ref{f2}) gives only an upper bound. For example, we obtain $R(K_{1,4},K_{12,12}) = 33$, while the right-hand side of~(\ref{f2}) gives $37$. Apart from general results~\cite{BFRS,Stev}, there are two earlier papers on the numbers $R(K_{1,4},K_{m,n})$. Lortz and Mengersen proved~\cite{LoM4} that $R(K_{1,4},K_{3,3}) = R(K_{1,4},K_{3,4}) = 11$. Shao et al. proved~\cite{Shao} that \mbox{$R(K_{1,4},K_{3,5}) = R(K_{1,4},K_{4,4}) = 13$.}

In this paper, we determine all values $R(K_{1,4},K_{m,n})$ for a fixed small $m$. It turns out that this requires finding cubic graphs with given order and girth at least $m + 2$. Due to a recent result by Eze et al.~\cite{Eze}, this problem has been completely solved for all $m \leq 8$, while the case $m \geq 9$ remains open. For that reason, we obtain a complete solution only for $m \leq 8$. We also determine the diagonal numbers $R(K_{1,4},K_{m,m})$  for $9 \leq m \leq 12$.

\section{Terminology}

All graphs are finite and simple. A graph is called \textit{cubic} (\textit{subcubic}) if all its vertices have degree $3$ (at most $3$). Call a subcubic graph \textit{near-cubic} if all its vertices have degree $3$ except possibly one vertex of degree $2$. Call a vertex of a subcubic graph \textit{deficient} if it has degree less than $3$. A \textit{$k$-cycle} (\textit{$k$-path}) is a cycle (path) containing exactly $k$ vertices. Similarly, a \textit{$k^+$-cycle} and a \textit{$k^-$-cycle} are cycles containing at least $k$ vertices and at most $k$ vertices, respectively. An $xy$-path is a path with endpoints $x$ and $y$.

For a vertex $v \in V(G)$ and an integer $k \geq 0$, let $N_k(v)$ ($N_k[v]$) denote the set of vertices at distance exactly $k$ from $v$ (at most $k$ from $v$). Write $N(v)$ ($N[v]$) instead of $N_1(v)$ ($N_1[v]$). For a set $U \subseteq V(G)$, its \textit{closed $k$-neighborhood} is defined by $N_k[U] = \bigcup_{u \in U}N_k[u]$ and its \textit{$k$-th distance layer} is defined by $N_0(U) = U$ and $N_k(U) = N_k[U] \setminus N_{k-1}[U]$ for $k \geq 1$. Write $N(U)$ ($N[U]$) instead of $N_1(U)$ ($N_1[U]$). A vertex $v$ is called an \textit{external neighbor} of a set $U$, if $v \in N(U)$. For a cycle $C$, let $|C|$ denote the number of its vertices and $N_k[C]$ ($N_k(C)$) denote the set of vertices at distance at most~$k$ (exactly~$k$) from $C$.

For a graph $G$ and a set $U \subseteq V(G)$, denote by $G[U]$ the subgraph of $G$ induced by $U$. A vertex set $U' \subseteq V(G)$ is called \textit{connected} if the induced subgraph $G[U']$ is connected. A set $U \subseteq V(G)$ with $|U| = a$ and $|N(U)| = b$ is called an \textit{$(a,b)$-set}. We say that $U$ is an $(a,b^+)$-set ($(a,b^-)$-set) if $|U| = a$ and $|N(U)| \geq b$ ($|N(U)| \leq b$). A graph is called \textit{$(a,b)$-free} if it contains no $(a,b^-)$-set. For $k \ge 3$, call a vertex $v \in V(G)$ \textit{$k$-cyclic} if there exists a connected $k$-vertex set $U \subseteq V(G)$ such that $v \in U$ and $G[U]$ contains a cycle. Note that $v$ is not required to lie on this cycle. Moreover, if a vertex $v$ is $k$-cyclic and belongs to an $l$-vertex component of a graph, then $v$ is $k'$-cyclic for all $k \leq k' \leq l$.

Let $G$ be a graph with vertices $u$ and $v$, then $G + uv$ ($G - uv$) denotes a graph obtained from $G$ by adding (removing) the edge $uv$. For a subset $S \subseteq V(G)$, let $G - S$ denote the induced subgraph $G[V(G) \setminus S]$. If $S = \{v\}$, write $G - v$ instead of $G - \{v\}$. Subdividing an edge $uv$ of a graph $G$ means deleting it and adding a new degree-$2$ vertex $w$ adjacent to both $u$ and $v$. Suppressing a degree-$2$ vertex $v$ of a graph $G$ means deleting $v$ and adding an edge between its two neighbors. Note that if $G$ is triangle-free, then the resulting graph is simple.

For a graph $G$, the girth $g(G)$ is the number of vertices in a shortest cycle of $G$ (if $G$ is acyclic, then $g(G) = \infty$). A \textit{$(3,g)$-cage} is a smallest cubic graph with girth~$g$. A cubic graph with girth at least $g$ is called a \textit{$(3,g^+)$-graph}.

\section{Preliminaries}

\subsection{Girth conditions for near-cubic graphs}

While $(3,g)$-cages exist for all $g \geq 3$, their precise orders are currently known only for $g \leq 12$.  To date, all possible orders of $(3,g^+)$-graphs have been determined for $g \leq 10$~\cite{Eze}, and we use this fact in what follows.

\begin{lemma}{\cite[Section~5.1]{Eze}}\label{t}
Let $n$ be an even integer. The following hold:

\begin{enumerate}
\item An $n$-vertex $(3,5^+)$-graph exists if and only if $n \geq 10$.

\item An $n$-vertex $(3,6^+)$-graph exists if and only if $n \geq 14$.

\item An $n$-vertex $(3,7^+)$-graph exists if and only if $n \geq 24$. 

\item An $n$-vertex $(3,8^+)$-graph exists if and only if $n = 30$ or $n \geq 34$.

\item An $n$-vertex $(3,9^+)$-graph exists if and only if $n \geq 58$. 

\item An $n$-vertex $(3,10^+)$-graph exists if and only if $n \in \{70,72\}$ or $n \geq 76$.
\end{enumerate}
\end{lemma}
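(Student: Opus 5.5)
Lemma~\ref{t} is a cited result, and the natural proof follows the structure of \cite{Eze}. For each girth bound $g$, we must establish two things. First, no $(3,g^+)$-graph exists below the stated threshold. Second, an $n$-vertex $(3,g^+)$-graph exists for every even $n$ in the stated set.

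\textbf{Nonexistence.} The lower bounds come from the cage orders. The $(3,5)$-cage is the Petersen graph (10 vertices), the $(3,6)$-cage is the Heawood graph (14), the $(3,7)$-cage is the McGee graph (24), and the $(3,8)$-cage is the Tutte--Coxeter graph (30). The $(3,9)$-cages have 58 vertices and the $(3,10)$-cages have 70. A cubic graph with girth at least $g$ on fewer vertices than the $(3,g)$-cage would contain, as its girth $g'\ge g$, a cage-violating graph. This is because the cage orders are nondecreasing in $g$, so it would contradict minimality for $g'$.

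The delicate part is the gaps, namely the nonexistence for $n=32$ when $g=8$, and for $n=74$ when $g=10$. For these I would rely on the exhaustive computer enumerations: the complete generation of cubic graphs of girth at least $8$ on $32$ vertices (e.g.\ via \texttt{genreg}/\texttt{minibaum}-style orderly generation), and the search in \cite{Eze} for girth $10$ at $74$ vertices. I expect this to be the main obstacle; there seems to be no short theoretical argument.

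\textbf{Existence.} Beyond each threshold, I would use an edge-insertion operation that raises the order by $2$ without lowering the girth. Take two edges $ab$ and $cd$ at distance at least $g-1$ or so from each other. Subdivide each with new vertices $x$ and $y$, and join $xy$. Any new cycle through $xy$ must travel from $\{a,b\}$ to $\{c,d\}$, so its length is at least $\mathrm{dist}(ab,cd)+3$. Hence the girth stays at least $g$ whenever two sufficiently far-apart edges exist. That holds once the graph is large compared to a ball of radius about $g/2$. For moderate orders near the threshold, this distance condition may fail. For those cases I would exhibit explicit graphs, such as generalized Petersen graphs or LCF-notation graphs found by computer, for $n=12$ ($g=5$), $n=16,18,\dots$ ($g=6$), $n=26,28,\dots$ ($g=7$), $n=34,36,\dots$ ($g=8$), and $n=60,\dots$ ($g=9$), continuing until the insertion step applies uniformly. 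For $g=10$, I would exhibit the known 70- and 72-vertex graphs and the graphs from $76$ upward.
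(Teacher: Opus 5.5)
The paper gives no proof of this lemma; it imports it verbatim from \cite{Eze}, so your outline is best read as a reconstruction of that source. The nonexistence half is sound. Below the thresholds you only need the cage orders $10,14,24,30,58,70$ together with monotonicity of cage orders in $g$. The two gaps ($n=32$ for girth $\ge 8$, $n=74$ for girth $\ge 10$) do rest on exhaustive computer searches, as you say. Disconnected graphs cause no trouble there, since each component would already need at least $30$, resp.\ $70$, vertices.

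The existence half contains a misestimate that hides most of the work. Your insertion step is correct, and it needs exactly $\mathrm{dist}(\{a,b\},\{c,d\})\ge g-3$, since a new cycle through $xy$ has at least that distance plus $3$ vertices. Such a pair of edges forces the graph to extend beyond a ball of radius about $g-3$ around an edge, not $g/2$. The count $|N_r[\{a,b\}]|\le 2^{r+2}-2$ guarantees such a pair only for $n\ge 2^{g-1}$, that is, $256$ for $g=9$ and $512$ for $g=10$. The cage orders, by contrast, lie close to the Moore bound, which is the size of a ball of radius about $g/2$. So the insertion step does not take over shortly after the threshold. Your \emph{explicit graphs near the threshold} would a priori have to cover every even order over a range of several hundred vertices. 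That is a computer search of the same nature as in \cite{Eze}, not a handful of examples.

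A cheaper way to handle the infinite tail is to use the fact that $(3,g^+)$-graphs need not be connected. Let $c_g$ be the cage order and suppose every even order in $[N_0,\,N_0+c_g-2]$ is realized. Then adding a disjoint cage, by induction, yields every even order $\ge N_0$. For $g=10$ this reduces the task to the orders $76,\dots,138$, because $140,142,144$ are sums of $70$ and $72$. Even so, a finite range of orders must still be settled by explicit, computer-found graphs, which is why the paper cites \cite{Eze} rather than proving the lemma.
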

The next trivial observation is given without a proof.

\begin{lemma}\label{l1}
For a given $n \geq 10$ and $g \geq 5$, the following hold:

1. If there exists an $(n-1)$-vertex cubic graph $G$ with girth at least $g$, then any $n$-vertex graph obtained from $G$ by subdividing an edge is a near-cubic graph with girth at least $g$. Furthermore, its degree-$2$ vertex does not belong to a $g$-cycle.

2. If every cubic $(n-1)$-vertex graph has girth at most $g-1$, then a near-cubic $n$-vertex graph $G$ with a degree-$2$ vertex $v$ either has girth at most $g-1$ or it has girth $g$ and $v$ belongs to a $g$-cycle.
\end{lemma}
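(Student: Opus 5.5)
Part 1 is a direct check on the subdivided graph. Let $G$ be an $(n-1)$-vertex cubic graph with girth at least $g$, and let $G'$ be obtained by subdividing an edge $uv$ with a new vertex $w$. Every vertex of $G'$ other than $w$ keeps its degree $3$, and $w$ has degree $2$, so $G'$ is near-cubic.

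Next I would compare cycles in the two graphs.
\begin{enumerate}
\item A cycle of $G'$ avoiding $w$ is a cycle of $G$ that avoids $uv$, so it has at least $g$ vertices.
\item A cycle $C'$ of $G'$ through $w$ must use both edges $uw$ and $wv$. Replacing the path $u,w,v$ by the edge $uv$ gives a cycle $C$ of $G$ with $|C'| = |C|+1 \ge g+1$.
\end{enumerate}
So $G'$ has girth at least $g$, and every cycle through $w$ has at least $g+1$ vertices. In particular $w$ lies on no $g$-cycle.

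For part 2, I would reverse the construction by suppressing the degree-$2$ vertex. Let $G$ be an $n$-vertex near-cubic graph with degree-$2$ vertex $v$. Its neighbours are $a$ and $b$, and I may assume $g(G) \ge g \ge 5$, since otherwise the conclusion holds. Suppose, for contradiction, that $v$ lies on no $g$-cycle. I would then show that $g(G) \ge g+1$ would force a contradiction too; it suffices to assume every cycle through $v$ has at least $g+1$ vertices. Since $G$ is triangle-free, $a$ and $b$ are non-adjacent. Suppressing $v$ therefore yields a simple graph $H$ on $n-1$ vertices in which every vertex has degree $3$, so $H$ is cubic.

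Then I would check the girth of $H$ in the same two cases.
\begin{enumerate}
\item A cycle of $H$ avoiding the new edge $ab$ is a cycle of $G$, so it has at least $g$ vertices.
\item A cycle of $H$ using $ab$ comes from a cycle of $G$ through $v$ by deleting $v$. It therefore has at least $(g+1)-1 = g$ vertices.
\end{enumerate}
Hence $H$ is an $(n-1)$-vertex cubic graph with girth at least $g$, contradicting the hypothesis. So if $g(G) \ge g$, then some cycle through $v$ has exactly $g$ vertices. That cycle certifies $g(G) \le g$, hence $g(G) = g$ and $v$ lies on a $g$-cycle.

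The only delicate point is that suppressing $v$ gives a simple graph. This needs $a$ and $b$ to be non-adjacent, which follows from the absence of triangles because $g \ge 5$. The assumption $n \ge 10$ plays no role beyond making the setting meaningful.
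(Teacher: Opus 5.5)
Your proof is correct and is the natural subdivision/suppression argument the paper has in mind. The paper states this lemma without proof, but its terminology remark that suppressing a degree-$2$ vertex of a triangle-free graph yields a simple graph, and its later use of suppression in Lemma~\ref{lower1}, point to exactly your comparison of cycle lengths; you also correctly handle the one non-trivial point, that $g \geq 5$ makes the two neighbours of $v$ non-adjacent, so $H$ is a simple cubic graph on $n-1$ vertices.
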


\subsection{Properties of $(a,b)$-sets}

The following simple observation relates the numbers $R(K_{1,4},K_{m,n})$ to $(a,b)$-free graphs.

\begin{observation}
The Ramsey number $R(K_{1,4},K_{m,n})$ is the smallest integer $N \geq m + n$ such that no $N$-vertex subcubic graph is $(m, N - m - n)$-free.
\end{observation}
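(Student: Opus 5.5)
The plan is to translate both halves of the Ramsey condition into statements about a graph $G$ on $N$ vertices and its complement $\overline{G}$, and then to compare the resulting characterisation with the definition of $R(K_{1,4},K_{m,n})$.

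\textbf{Step 1 (star).} A graph $G$ contains no $K_{1,4}$ exactly when every vertex has degree at most $3$, that is, when $G$ is subcubic. Hence the only $N$-vertex graphs that could avoid both conditions in the Ramsey definition are subcubic graphs.

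\textbf{Step 2 (bipartite graph in the complement).} Next I would show that, for a subcubic $G$ on $N\ge m+n$ vertices, $\overline{G}$ contains $K_{m,n}$ if and only if $G$ contains an $(m,(N-m-n)^-)$-set.

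For the forward direction, take two disjoint sets $A$ and $B$ with $|A|=m$ and $|B|=n$, such that $G$ has no edges between them. Then $N(A)$ is disjoint from both $A$ and $B$, because $N(A)\cap A=\emptyset$ by definition. So $|N(A)|\le N-m-n$, and $A$ is an $(m,(N-m-n)^-)$-set.

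For the converse, let $A$ be an $m$-set with $|N(A)|\le N-m-n$. Then $V\setminus(A\cup N(A))$ has at least $n$ vertices. Choose $n$ of them to form $B$. No vertex of $B$ is adjacent to $A$, so $A$ and $B$ span a $K_{m,n}$ in $\overline{G}$.

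Consequently, a subcubic $G$ witnesses $N<R(K_{1,4},K_{m,n})$ if and only if $G$ is $(m,N-m-n)$-free.

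\textbf{Step 3 (the condition $N\ge m+n$).} This needs a brief justification.

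\begin{itemize}
\item For $N<m+n$, the empty graph on $N$ vertices contains no $K_{1,4}$. Its complement has too few vertices to contain $K_{m,n}$. So $R(K_{1,4},K_{m,n})\ge m+n$.
\item The property ``some $N$-vertex graph avoids both'' is monotone decreasing in $N$: delete a vertex from a good graph and it stays good.
\end{itemize}

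Together these show that $R$ is the least $N\ge m+n$ such that no $N$-vertex graph avoids both conditions. By Steps 1 and 2, this is the least $N\ge m+n$ such that no $N$-vertex subcubic graph is $(m,N-m-n)$-free.

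\textbf{Main subtlety.} There is no real obstacle here. The only care needed is the bookkeeping: $N(A)$ excludes $A$ itself, and the restriction $N\ge m+n$ is what keeps the quantity $N-m-n$ meaningful (nonnegative).
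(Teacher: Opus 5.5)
Your proof is correct. The paper states this as a ``simple observation'' with no proof, and your translation is exactly the reasoning it takes for granted: $G$ avoids $K_{1,4}$ iff it is subcubic, $\overline{G}\supseteq K_{m,n}$ with $m$-side $A$ iff $|N(A)|\le N-m-n$, and the empty graph rules out every $N<m+n$. The monotonicity argument in Step 3 is harmless but not strictly needed: your equivalence holds for each fixed $N\ge m+n$, so the two minima coincide directly.
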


\begin{lemma}\label{l2}
Let $G$ be a subcubic $n$-vertex graph and $U \subseteq V(G)$ be an $m$-vertex set containing $d_i$ vertices of degree $i$ in $G$, $i \in \{0,1,2\}$. Let $p_j$ be the number of vertices in $V(G) \setminus U$ with exactly $j \in \{2,3\}$ neighbors in $U$. Furthermore, let $c$ be the number of components in $G[U]$ and $s = |E(G[U])| - m + c$ be the cyclomatic number of $G[U]$. Then,  $U$ is an $(m,m + 2c - (3d_0 + 2d_1 + d_2 + p_2 + 2p_3 + 2s))$-set for all $1 \leq m \leq n$. 
\end{lemma}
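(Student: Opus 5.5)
We count the edges leaving $U$ in two ways. Since $G$ is subcubic, let $e_{\mathrm{out}}$ denote the number of edges with exactly one endpoint in $U$. The degree sum over $U$ equals $2|E(G[U])| + e_{\mathrm{out}}$. Each vertex of $U$ has degree $3$ except for the $d_0+d_1+d_2$ deficient ones, so the sum is $3m - 3d_0 - 2d_1 - d_2$; this assumes $d_i$ counts the vertices of $U$ of degree $i$ and that $G$ has no vertices of degree above $3$. Hence
\[
e_{\mathrm{out}} = 3m - 3d_0 - 2d_1 - d_2 - 2|E(G[U])|.
\]
Substituting $|E(G[U])| = m - c + s$, the definition of the cyclomatic number, gives
\[
e_{\mathrm{out}} = m + 2c - (3d_0 + 2d_1 + d_2 + 2s).
\]

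Next we relate $e_{\mathrm{out}}$ to $|N(U)|$. Every vertex of $N(U)$ lies outside $U$ and has $j \in \{1,2,3\}$ neighbors in $U$, since it has degree at most $3$. Let $p_1$ be the number of vertices with exactly one neighbor in $U$. Then $e_{\mathrm{out}} = p_1 + 2p_2 + 3p_3$, while $|N(U)| = p_1 + p_2 + p_3$. So
\[
|N(U)| = e_{\mathrm{out}} - p_2 - 2p_3 = m + 2c - (3d_0 + 2d_1 + d_2 + p_2 + 2p_3 + 2s),
\]
which is exactly the claim that $U$ is an $(m, \cdot)$-set with the stated neighborhood size.

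The step to check carefully is the bookkeeping of degrees. The count must use degrees in $G$, not in $G[U]$, and vertices of degree $3$ contribute nothing extra. We must also confirm that $s$ is defined with $c$ components, so that $|E(G[U])| - m + c = s$ holds even when $G[U]$ is disconnected. No earlier lemma is needed; the whole argument is this double count, and the range $1 \le m \le n$ only ensures $U$ is nonempty, so that $c \ge 1$ makes sense.
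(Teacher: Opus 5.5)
Your proof is correct and takes essentially the same route as the paper: the degree sum over $U$, minus twice the number of internal edges, corrected by $p_2 + 2p_3$ for external neighbors that are counted more than once. Introducing $e_{\mathrm{out}}$ and $p_1$ explicitly simply spells out the justification for the first line of the paper's computation.
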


\begin{proof}

The statement follows from the fact that
\begin{align*}|N(U)|  &= \sum_{u \in U}\deg(u) - 2|E(G[U])| - p_2 - 2p_3 \\ &= \bigl(3m - 3d_0 - 2d_1 - d_2\bigr) - 2 \cdot \bigl(m - c + s \bigr) - p_2 - 2p_3 \\ &= m + 2c - \bigl(3d_0 + 2d_1 + d_2 + p_2 + 2p_3 + 2s \bigr).
\end{align*}
\end{proof}

\begin{lemma}\label{l3}
Let $G$ be a subcubic $n$-vertex graph, and let $n \geq a \geq 3$ and $k \geq 1$. The following hold:

\begin{enumerate}
\item $G$ is not $(a,a + 2)$-free. Furthermore, any connected $k$-vertex set of $G$ is a $(k,(k+2)^-)$-set.

\item If $g(G) \leq a + 1$ or $\delta(G) \leq 2$, then $G$ is not $(a,a + 1)$-free. Furthermore, any connected $k$-vertex set of $G$ that contains all but one vertex of some cycle or a deficient vertex is a $(k,(k+1)^-)$-set.

\item Suppose that at least one of the following conditions holds:

\begin{enumerate}
\item $g(G) \leq a$;
\item $\delta(G) = 1$;
\item there are two vertices of degree $2$ at distance at most $a - 1$.
\end{enumerate}
Then $G$ is not $(a,a)$-free. Furthermore, any connected $k$-vertex set of $G$ that contains all vertices of some cycle, a degree-$1$ vertex or two degree-$2$ vertices is a $(k,k^-)$-set.
\end{enumerate}

\end{lemma}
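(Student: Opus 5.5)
The plan is to derive everything from Lemma~\ref{l2}. In each part I will take a connected set $U$ with $k$ vertices, so that $c=1$. Then Lemma~\ref{l2} gives
\[
|N(U)| = k+2-(3d_0+2d_1+d_2+p_2+2p_3+2s).
\]
Every quantity inside the bracket is nonnegative. So each "Furthermore" claim comes down to showing that this bracket is at least $0$, $1$ or $2$ respectively. Each "$G$ is not $(a,b)$-free" claim then follows by exhibiting a connected $a$-vertex set with the required property. To build such sets I will use that a connected set with fewer than $a$ vertices can be extended one vertex at a time, adding a neighbor each time, until it has $a$ vertices. This works inside a component of order at least $a$. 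If the relevant component is smaller than $a$, I will take the whole component $W$, which has $N(W)=\emptyset$, and add arbitrary further vertices. A disjoint union of pieces has $|N(U)|\le\sum|N(\text{piece})|$, so the bound $\le$ the target size still holds, using the slack of each added piece; this needs a small check.

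\textbf{Part 1.} The bracket is nonnegative, so $|N(U)|\le k+2$ for every connected $U$. For $(a,a+2)$-freeness failing, I will take any connected $a$-set. It exists if some component has at least $a$ vertices. Otherwise I will use a union of components, or components plus a connected piece, whose neighborhood size is bounded by the sum over the pieces.

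\textbf{Part 2.} Suppose first that $U$ contains a deficient vertex. Then $d_0+d_1+d_2\ge1$, so the bracket is at least $1$. Suppose instead that $U$ contains all vertices of a cycle $C$ except one vertex $w$. If $w\in U$ after all, then $U$ contains the whole cycle, so $s\ge1$ and the bracket is at least $2$. If $w\notin U$, then $w$ has its two cycle-neighbors in $U$, so $p_2+p_3\ge1$ and the bracket is at least $1$. For the existence claim: if $g(G)\le a+1$, I will take a shortest cycle, which has at most $a+1$ vertices. I will delete one vertex to get a path with at most $a$ vertices, which is connected, and extend it to $a$ vertices. If $\delta(G)\le2$, I will start from a deficient vertex and extend.

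\textbf{Part 3.} A full cycle in $U$ gives $s\ge1$, so the bracket is at least $2$. A degree-$1$ vertex contributes $2d_1\ge2$. Two degree-$2$ vertices contribute $d_2\ge2$. For the existence claims: in case (a), a cycle of length at most $a$ is a connected set containing a cycle, and I extend it. In case (b), I extend from the degree-$1$ vertex. In case (c), there is a shortest path between the two degree-$2$ vertices. It has at most $a$ vertices because their distance is at most $a-1$, so I take this path and extend it.

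The main obstacle is the small-component issue in the existence claims, i.e. when components have fewer than $a$ vertices. I will handle it by noting that a whole component has empty neighborhood, which gives slack of size (component size $+2$). This slack absorbs whatever is missing, since each target bound is at least $a$ while the neighborhood of a union is at most the sum of the pieces' neighborhoods. I will verify this carefully by combining one component that contains the special structure with additional full components or partial connected pieces.
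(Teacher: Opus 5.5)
Your proposal is correct and follows the paper's own argument. The ``furthermore'' claims come from Lemma~\ref{l2} with $c=1$ by bounding the subtracted term. The existence claims come from extending a small connected set that carries the relevant structure; if its component $W$ has fewer than $a$ vertices, you take $W$ whole, add further whole components, and finish with a connected remainder $Y$. In your ``small check'', use only one such partial piece $Y$, exactly as the paper does, since each partial piece can contribute up to $|Y|+2$. Then $|N(U)|=|N(Y)|\le |Y|+2\le a+2-|W|$, and $|W|\ge 2$ in part~3 (respectively $|W|\ge 1$ in part~2) gives the required bound.
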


\begin{proof}
We prove the third statement; the other two are proved similarly. Each of the conditions (a)--(c) yields the existence of a connected set $X \subseteq V(G)$ with at most $a$ vertices containing a cycle, a degree-$1$ vertex with its neighbor or two degree-$2$ vertices. If there exists a connected $a$-vertex set $U$ containing $X$, then $U$ is an $(a,a^-)$-set by Lemma~\ref{l2}. Otherwise, let $U$ consist of the whole component containing $X$, possibly some other whole components, and a (possibly empty) connected set $Y$ in one further component. Observe that $|X| \geq 2$, thus $|Y| \leq a - 2$, and $|N(U)| = |N(Y)| \leq a$ by Lemma~\ref{l2}. Hence $G$ is not $(a,a)$-free as required. The last assertion follows immediately from Lemma~\ref{l2}.
\end{proof}

\begin{lemma}\label{l4}
Let $G$ be a cubic graph with $g(G) \geq 5$. Then $G$ is $(a,a + 1)$-free for every $3 \leq a \leq g(G) - 2$.
\end{lemma}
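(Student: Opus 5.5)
Fix a cubic graph $G$ with $g = g(G) \geq 5$, an integer $3 \leq a \leq g-2$, and an arbitrary $a$-vertex set $U \subseteq V(G)$. The plan is to show $|N(U)| \geq a+2$ directly, using Lemma~\ref{l2}. Since $G$ is cubic, $d_0 = d_1 = d_2 = 0$, and Lemma~\ref{l2} gives
\[
|N(U)| = a + 2c - (p_2 + 2p_3 + 2s).
\]
The key idea is that every "defect" (a cycle inside $G[U]$, or an outside vertex with two or three neighbors in $U$) would produce a short cycle, which the girth forbids. The argument also needs the number of components $c$ to compensate for such defects.

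\textbf{Step 1: no internal cycles.} Any cycle in $G[U]$ has at most $a \leq g-2$ vertices, contradicting the girth. So $s = 0$ and $G[U]$ is a forest with $c$ tree components $T_1,\dots,T_c$.

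\textbf{Step 2: bound the shared neighbors.} Suppose an outside vertex $w$ has two neighbors $x, y$ in the same component $T_i$. Then the $xy$-path in $T_i$ together with $w$ forms a cycle with at most $|T_i| + 1 \leq a + 1 \leq g - 1$ vertices, which is a contradiction. Hence each outside vertex has at most one neighbor in each tree. It follows that $p_2 + 2p_3$ is at most the number of "merges" made by outside vertices among different trees. To make this precise, I will form an auxiliary multigraph $H$ whose vertices are the trees $T_i$ and the vertices $w \in N(U)$ with at least two neighbors in $U$. Each such $w$ is joined to the trees it touches, and $w$ contributes $\deg_U(w) - 1$ to $p_2 + 2p_3$.

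\textbf{Step 3: the auxiliary graph is a forest.} If $H$ contained a cycle, then lifting it to $G$ would give a cycle in $G[U \cup W]$. Here $W$ is the set of connector vertices used, and the lifting uses tree paths inside each $T_i$. This cycle uses at most $|U| + |W|$ vertices, but that count could exceed $g - 1$, so a finer count is needed. The lifted cycle passes through $k \geq 2$ trees and $k$ connector vertices. Its length is at most the sum of the sizes of the trees it visits plus $k$. I expect this estimate to be the main obstacle, since a naive count gives $a + k$, which may exceed $g - 1$.

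To get around this, I would avoid the cycle argument and instead count globally. Take $U' = U \cup W$, where $W$ is the set of all vertices of $N(U)$ with at least two neighbors in $U$. If $p_2 + 2p_3 \geq 2c - 1$, then $G[U']$ has at least $|U'| - c + (p_2 + 2p_3) - (\text{number of components of } G[U'])$ excess edges. More simply, a connected component of $G[U']$ contains a cycle of length at most $a + |W|$. To keep this below $g$, I would choose a minimal sub-configuration: pick a shortest cycle of $H$ and note that it uses at most $a$ vertices of $U$ together with some connectors. This is still not tight, so instead I would run an induction on $a$. Let $u$ be a leaf (or isolated vertex) of the forest $G[U]$, and apply the statement to $U \setminus \{u\}$ when $a-1 \geq 3$. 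Removing $u$ loses $u$'s contribution, but $u$ has at least $2$ neighbors outside $U$ (at least 3 if $u$ is isolated). Any such neighbor that is already counted in $N(U \setminus \{u\})$ would give a cycle through $u$ of length at most $a + 1 \leq g - 1$, as in Step 2, whenever it is joined to $U \setminus \{u\}$ within a connected piece. Handling disconnected pieces is exactly the delicate point; for it I would pick $u$ carefully (for example, a leaf of a component chosen to minimize interaction). The base case $a = 3$ checks directly that a $3$-set in a graph of girth at least $5$ has at least $5$ neighbors.
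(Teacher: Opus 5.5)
You correctly reduce the lemma to showing $p_2+2p_3\le 2c-2$ (your Steps 1 and 2 are fine), but you never prove this inequality, and both routes you sketch fail.

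\textbf{The forest claim is false.} In Step 3 you claim the auxiliary graph $H$ is a forest; this is not just hard to prove, it is untrue. Take a $g$-cycle $C$ with $g\ge 6$, and choose two vertices $w_1,w_2\in V(C)$ that split $C$ into two arcs of at least two vertices each. Let $U=V(C)\setminus\{w_1,w_2\}$, so $a=g-2$ and $c=2$. Both $w_1$ and $w_2$ touch both arcs, so $H$ contains a $4$-cycle. Moreover $|N(U)|\le a+4-2=a+2$, so the lemma is tight here.

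\textbf{The induction does not close.} The hypothesis only gives $|N(U\setminus\{u\})|\ge a+1$. If a leaf $u$ has an outside neighbor already adjacent to another component, then adding $u$ changes the neighborhood size by at most $0$, so you only get $|N(U)|\ge a+1$. In the example above every leaf of $G[U]$ is of this kind, and $G[U]$ has no isolated vertices. So no careful choice of $u$ helps; you would need a strictly stronger inductive statement, which you do not formulate.

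\textbf{The missing idea.} Girth cannot be used to bound each lifted cycle separately: cycles of length exactly $g$ really occur in $G[N[U]]$, as the example shows. Instead, count independent cycles in a small vertex set. The paper assumes $|N(U)|\le a+1$. Then $G[N[U]]$ has at most $2a+1$ vertices but at least $(a-c)+(a+2c)=2a+c$ edges, so its cyclomatic number is at least $c$. Every cycle has at least $a+2$ vertices, so there are no two disjoint cycles. For $c\ge 3$, a minimal subgraph with $|E|=|V|+2$ then has minimum degree $2$ and exactly four degree-$3$ vertices, and must be a subdivided $K_4$. Its four cycles through three branch vertices each have at least $a+2$ edges and cover every edge exactly twice. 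This forces at least $2a+4$ edges, hence at least $2a+2$ vertices, a contradiction.

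The case $c=1$ is your Step 2. For $c=2$ one argues directly: if three outside vertices meet both components, two of them close a cycle with at most $a+2\le g$ vertices. That cycle is therefore a shortest cycle containing all of $U$. The third vertex has two neighbors on it, which produces a shorter cycle.
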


\begin{proof}
Suppose, to the contrary, that $G$ has an $(a,(a+1)^-)$-set $U$. Since \mbox{$g(G) > a$}, the subgraph $G[U]$ is acyclic. Let $\overline{U} = V(G) \setminus U$ and let $c$ be the number of components in $G[U]$.

\textit{Case 1.} $c \leq 2$. If $c = 1$, no two vertices of $G[U]$ have a common neighbor outside $U$, hence $U$ is an $(a,a+2)$-set by Lemma~\ref{l2} (take $c = 1$ and $d_i = p_j = s = 0$). Suppose that $G[U]$ has two components, $U_1$ and $U_2$. Since $g(G) > a$, no vertex of $\overline{U}$ has two distinct neighbors in the same component of $G[U]$. If, moreover, $\overline{U}$ has at most two vertices with two neighbors in $U$, then $U$ is an $(a,(a+2)^+)$-set by Lemma~\ref{l2}. Otherwise, consider three vertices $v_1,v_2,v_3 \in \overline{U}$ with neighbors in both $U_1$ and $U_2$. Let~$P_1$, respectively $P_2$, be a $v_1v_2$-path whose internal vertices lie in $U_1$, respectively in~$U_2$. Then the cycle $C = v_1P_1v_2P_2v_1$ has at most $a+2 \leq g(G)$ vertices, with equality only if $C$ contains all vertices of $U$. The vertex $v_3$ has two neighbors on a $5^+$-cycle $C$, hence there is a  shorter cycle passing through $v_3$, a contradiction with $|C| = g(G) =  a + 2$.

\textit{Case 2.} $c \geq 3$. The subgraph $H = G[N[U]]$ has at most $2a + 1$ vertices. Since $g(H) \geq g(G) \geq a + 2$, any two cycles of $H$ have a common vertex. Since $G$ is cubic and $H[U]$ is acyclic with $a - c$ edges, there are $a + 2c$ edges between $U$ and $N_1(U)$, hence $H$ has at least $2a + c$ total edges and exactly one component $H'$ with cycles. Furthermore, $|E(H')| \geq |V(H')| + 2$. Consider an inclusion-wise minimal subgraph $H''$ of $H'$ with $|E(H'')| = |V(H'')| + 2$. Observe that $H''$ has minimum degree at least~$2$ (otherwise it is possible to remove one vertex and one edge and obtain a smaller subgraph). Moreover, every cycle of $H''$ is a cycle of $H$, hence $H''$ does not contain disjoint cycles. Thus $H''$ is connected with exactly four degree-3 vertices. If every pair of these vertices is joined by a path whose internal vertices have degree $2$ in $H''$, then $H''$ is a subdivision of $K_4$. Consider the four cycles of $H''$ that contain exactly three degree-$3$ vertices. These cycles have at least $4a+8$ total edges. However, every edge of $H''$ belongs to exactly two of them and there are at most $2a+3$ edges, a contradiction. If there exist two degree-$3$ vertices of $H''$ not joined by such a path, then $H''$ contains a cycle $C$ that passes through at most two degree-$3$ vertices of~$H''$. Then $H'' - V(C)$ has another cycle and $H''$ contains two disjoint cycles, a contradiction.
\end{proof}

\begin{lemma}\label{cyclic}
Let $G$ be a connected $n$-vertex near-cubic graph with a degree-$2$ vertex $v$. For all $1 \leq k \leq n/2 - 1$, the following hold:

1. If $v$ is not $(2k+1)$-cyclic, then the subgraph $G[N_k[v]]$ is a $(2^{k+1}-1)$-vertex tree with exactly $2^s$ vertices in $N_s(v)$ for all $1 \leq s \leq k$.

2. If $v$ is not $(2k+2)$-cyclic, then the subgraph $G[N_k[v]]$ is a $(2^{k+1} - 1)$-vertex tree with exactly $2^s$ vertices in $N_s(v)$ for all $1 \leq s \leq k$. Furthermore, $N_{k+1}(v)$ has exactly $2^{k + 1}$ vertices.
\end{lemma}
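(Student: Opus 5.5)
We argue by contradiction using a breadth-first search from $v$, and we rely throughout on the definition of $k$-cyclic together with the remark from the Terminology section. That remark says a $k$-cyclic vertex is $k'$-cyclic for every $k \le k' \le l$, where $l$ is the order of its component. Here the component is all of $G$, and $2k+2 \le n$ because $k \le n/2-1$.

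For part~1, suppose $G[N_k[v]]$ contains a cycle. Among all cycles lying in $G[N_k[v]]$, take the one whose farthest vertex from $v$ is at distance $t \le k$ from $v$, with $t$ as small as possible. Any cycle in $G[N_t[v]]$ passes through $N_t(v)$ by minimality of $t$, and there are two ways such a cycle can arise.
\begin{enumerate}
\item An edge $xy$ joins two vertices of $N_t(v)$, or of $N_s(v)$ for some $s\le t$.
\item A vertex $x\in N_s(v)$ has two neighbours in $N_{s-1}(v)$.
\end{enumerate}
In either case we take shortest paths from the offending vertices back to $v$. Their union with the offending edge, or with the vertex $x$, is a connected set containing a cycle. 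Its size is at most $2s+1$: two geodesics of length at most $s$ sharing $v$ give at most $2s+1$ vertices, and at most $2s$ in the case of a single vertex with two parents. Since $s\le k$, this connected cyclic set has at most $2k+1$ vertices. We enlarge it to exactly $2k+1$ vertices by adding neighbouring vertices, which is possible because $G$ is connected and $2k+1\le n$. This makes $v$ $(2k+1)$-cyclic, a contradiction. Hence $G[N_k[v]]$ is a tree.

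Being a tree, $G[N_k[v]]$ is a BFS tree in which every vertex of $N_s(v)$, for $1\le s\le k-1$, has exactly one neighbour in $N_{s-1}(v)$ and no neighbours inside $N_s(v)$. A vertex of $N_s(v)$ also has no neighbours in $N_{s+1}(v)$ shared with another vertex of $N_s(v)$, since a shared child would be a vertex of $N_{s+1}(v)$, $s+1\le k$, with two parents. All vertices other than $v$ have degree $3$, because $v$ is the unique possibly deficient vertex. Therefore each vertex of $N_s(v)$ with $s\le k-1$ contributes exactly $2$ distinct children. Together with $\deg v=2$, induction gives $|N_s(v)|=2^s$, and summing yields $2^{k+1}-1$ vertices.

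For part~2, not being $(2k+2)$-cyclic implies not being $(2k+1)$-cyclic, by the same monotonicity remark with $2k+2\le n$. So part~1 already gives the tree structure. It remains to show $|N_{k+1}(v)|=2^{k+1}$. Each of the $2^k$ vertices in $N_k(v)$ has degree $3$ and exactly one parent, so there are exactly $2^{k+1}$ edges from $N_k(v)$ outward, that is, to $N_{k+1}(v)$ or inside $N_k(v)$. If two vertices of $N_k(v)$ are adjacent, the two geodesics to $v$ plus that edge form a cyclic connected set of at most $2k+1$ vertices. If some $w\in N_{k+1}(v)$ has two neighbours in $N_k(v)$, then $w$ together with two geodesics forms a cyclic connected set of at most $2k+2$ vertices. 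In both cases, after padding to exactly $2k+2$ vertices using connectivity, $v$ is $(2k+2)$-cyclic, a contradiction. Hence the $2^{k+1}$ outward edges go to distinct vertices of $N_{k+1}(v)$, giving $|N_{k+1}(v)|=2^{k+1}$.

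The only delicate point is the careful vertex count of the cyclic witness sets: two geodesics of length $s$ that share their endpoint $v$ have at most $2s+1$ vertices in their union. That count is exactly what separates the $2k+1$ threshold in part~1 from the $2k+2$ threshold in part~2. We also need the padding step, which uses connectivity and the hypothesis $k\le n/2-1$.
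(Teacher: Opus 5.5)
Your proof is correct and follows essentially the same route as the paper. Both arguments layer the graph by breadth-first search from $v$, locate a defect (an edge inside a layer, or a vertex with two neighbours in the previous layer), take two geodesics to $v$ to get a cyclic connected set of at most $2s+1$ or $2s+2$ vertices, and pad it using connectivity and $2k+2\le n$. The only difference is organizational: you prove part~1 first and obtain part~2 from it plus the count of the $2^{k+1}$ edges leaving $N_k(v)$, whereas the paper proves part~2 directly and calls part~1 analogous. Your unproved dichotomy for where a cycle can arise is justified in one line by looking at the cycle's vertex farthest from $v$.
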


\begin{proof}
We prove the second statement; it is straightforward to prove the first statement analogously. If, for all $1 \leq s \leq k$, every vertex in $N_s(v)$ has exactly two neighbors in $N_{s+1}(v)$ that are not adjacent to other vertices of $N_s(v)$, then it is easy to see that the statement holds. Otherwise, let $r \leq k$ be the smallest integer such that two vertices $w_1,w_2 \in N_r(v)$ are adjacent or have a common neighbor in $N_{r+1}(v)$. If $w_1w_2 \in E(G)$, take a shortest $vw_1$-path and a shortest $vw_2$-path; together, these paths form a connected set with a cycle on at most $2r+1$ vertices, hence $v$ is $(2k+2)$-cyclic, a contradiction. If $w_1w_2 \notin E(G)$, let $w_3 \in N_{r+1}(v)$ be their common neighbor. Consider two distinct $(r+2)$-vertex $vw_3$-paths, one passing through $w_1$, and another passing through $w_2$. Together, these paths form a connected set with a cycle on at most $2r+2$ vertices, hence $v$ is $(2k+2)$-cyclic, again a contradiction.
\end{proof}

\subsection{Edge-maximality and greedy component selection}

\begin{lemma}\label{edge}
Let $G$ be a subcubic $(a,b)$-free graph, where $a,b \geq 3$. Then there exists a subcubic $(a,b)$-free graph $G'$ that contains $G$ as a spanning subgraph and whose deficient vertices form a (possibly empty) clique.
\end{lemma}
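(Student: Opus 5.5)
The plan is to take $G'$ to be an \emph{edge-maximal} supergraph of $G$ on the same vertex set that is still subcubic and $(a,b)$-free, and to show that its deficient vertices must be pairwise adjacent. Such a $G'$ exists, since $G$ itself qualifies and there are only finitely many graphs on $V(G)$. Fix this $G'$.

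Suppose, for contradiction, that $u,v$ are two nonadjacent deficient vertices of $G'$. Then $G'' = G' + uv$ is again subcubic, because $u$ and $v$ both had degree at most $2$. By maximality, $G''$ is not $(a,b)$-free, so it has an $(a,b^-)$-set $U$. The key step is to compare $N_{G''}(U)$ with $N_{G'}(U)$. Adding one edge can only add neighbors: the only new external neighbor that can appear is $v$ (when $u\in U$ and $v\notin U$) or $u$ (symmetrically). Hence $|N_{G'}(U)| \le |N_{G''}(U)|$ in every case, and more precisely:
\begin{itemize}
\item if $u,v\in U$ or $u,v\notin U$, then $N_{G'}(U)=N_{G''}(U)$;
\item if exactly one of $u,v$ lies in $U$, then $N_{G'}(U)$ is $N_{G''}(U)$ or $N_{G''}(U)$ minus one vertex.
\end{itemize}
In all cases $|N_{G'}(U)|\le |N_{G''}(U)|\le b$. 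So $U$ is already an $(a,b^-)$-set in $G'$, contradicting that $G'$ is $(a,b)$-free. Therefore any two deficient vertices of $G'$ are adjacent, i.e.\ they form a clique. The empty set and a single vertex count as cliques trivially.

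The main point to get right is the monotonicity claim: adding an edge never decreases $|N(U)|$. This holds because $N(U)$ consists of vertices outside $U$ that have a neighbor in $U$, and adding edges only enlarges neighborhoods. With that in place, $(a,b)$-freeness is inherited by spanning subgraphs, which is exactly what maximality needs. I do not expect the hypotheses $a,b\ge 3$ to be used beyond keeping the notion nontrivial.

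A final remark: the clique of deficient vertices has size at most $4$. If it had $5$ or more vertices, each would have degree at least $4$ inside the clique, contradicting subcubicity. Nothing further is needed, since the statement only asks for the clique property.
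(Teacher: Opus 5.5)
Your proof is correct and is essentially the paper's argument: the paper adds edges between nonadjacent deficient vertices one at a time, using exactly your observation that adding an edge cannot decrease $|N(U)|$, and your edge-maximal choice of $G'$ is the extremal form of that iteration. As a minor aside, your closing bound can be sharpened to a clique of at most $3$ vertices, since each deficient vertex has degree at most $2$; the lemma itself does not need this.
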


\begin{proof}
If $G$ has a pair of nonadjacent deficient vertices $u$ and $v$, add an edge~$uv$. The resulting graph $G + uv$ is subcubic $(a,b)$-free, because if $U \subseteq V(G)$ is an $(a,(b+1)^+)$-set of $G$, then it is an $(a,(b+1)^+)$-set of $G + uv$. Continue to add edges while possible, until we obtain a graph $G'$ satisfying the condition of the lemma.
\end{proof}

Let $G$ be a disconnected subcubic graph all but at most one of whose components are cubic, and let $6 \leq k \leq |V(G)|$. Consider the following procedure called \textit{greedy component selection} that constructs a $k$-vertex subset of $V(G)$. Start with an empty subset and stop as soon as $k$ vertices have been added. If the number of vertices of some component does not exceed the number of vertices that remain to be added, add this component entirely. If this is no longer possible and $r \geq 1$ vertices remain, fix a smallest component $G'$ that has not yet been chosen. If $G'$ is acyclic, select any connected $r$-vertex subset of $G'$. Otherwise, select a connected $r$-vertex subset that either contains at least one cycle of $G'$ or all its vertices belong to a shortest cycle of $G'$. Call a $k$-vertex subset $U \subseteq V(G)$ \textit{greedy} if it can be obtained by greedy component selection (generally speaking, this set is not uniquely determined). The next lemma shows that greedy component selection is useful to construct sets with small neighborhoods in disconnected graphs.

\begin{lemma}\label{greedy}
Let $a \geq 6$ and $G$ be a disconnected subcubic graph on at most $4a - 1$ vertices all but at most one component of which are cubic with girth at most $a-1$. Then every greedy $a$-vertex subset of $G$ is an~$(a,(a-1)^-)$-set.
\end{lemma}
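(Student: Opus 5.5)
Let $U$ be a greedy $a$-vertex subset. Every component added entirely contributes nothing to $N(U)$, so $|N(U)| = |N(Y)|$, where $Y$ is the connected $r$-vertex set selected in the last step inside the smallest unchosen component $G'$. If $r = 0$, then $N(U) = \emptyset$ and we are done. So assume $1 \le r \le a$. By the stopping rule, every unchosen component, in particular $G'$, has more than $r$ vertices. The whole proof then reduces to bounding $|N(Y)|$ with Lemma~\ref{l2} and Lemma~\ref{l3}, splitting on the type of $G'$ and the size of $r$ relative to the girth $g'$ of $G'$.

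\emph{Case 1: $G'$ is cubic with girth $g' \le a-1$.}
\begin{itemize}
\item If $r \le g'-2$, then $Y$ is a path on a shortest cycle and is acyclic. Lemma~\ref{l3}(1) gives $|N(Y)| \le r+2 \le g' \le a-1$.
\item If $r = g'-1$, then $Y$ is a path on the shortest cycle missing exactly one vertex of it. That vertex is a common outside neighbor of the two ends, so $p_2 \ge 1$ in Lemma~\ref{l2}, and $|N(Y)| \le r+1 = g' \le a-1$.
\item If $g' \le r \le a-1$, then $Y$ contains a cycle. Lemma~\ref{l3}(3) gives $|N(Y)| \le r \le a-1$.
\end{itemize}
The remaining subcase $r = a$ means that no component was taken whole. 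Then every component has at least $a+1$ vertices, and since $|V(G)| \le 4a-1$ there are at most three components. Here I must gain one extra unit, namely $s \ge 2$ or $p_2 \ge 1$ or $p_3 \ge 1$ in Lemma~\ref{l2}, for a suitably chosen $Y$ that contains a cycle. My plan is to start from a shortest cycle $C$ and grow $Y$ breadth-first inside $G'$. Since $G'$ is cubic, finite and has more than $a$ vertices, some vertex outside the current set must eventually have two neighbors in it, or else the layers grow like a tree. I would then use the size bound $4a-1$ to show the first of these events already happens before $a$ vertices are used. This subcase is the step I expect to be the main obstacle, because it depends on the precise freedom allowed in choosing $Y$ within the greedy rule.

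\emph{Case 2: $G'$ is the unique non-cubic component.}
\begin{itemize}
\item If $Y$ contains a cycle, or a deficient vertex together with a cycle, or two deficient vertices, Lemma~\ref{l3} gives $|N(Y)| \le r$ or better.
\item If $Y$ contains a single deficient vertex, Lemma~\ref{l3}(2) gives $|N(Y)| \le r+1$.
\item If $G'$ is acyclic, or $Y$ avoids cycles and deficient vertices, the general bound $|N(Y)| \le r+2$ applies.
\end{itemize}
These bounds suffice whenever $r \le a-3$. If $r \ge a-2$, then $G'$ has at least $a-1$ vertices, and because $G'$ is the smallest unchosen component, every other unchosen component is cubic with more than $|V(G')|$ vertices. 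Combined with $|V(G)| \le 4a-1$ and $a \ge 6$, this leaves very few configurations. In each of them I would check directly that $Y$ can be taken to contain a cycle or a deficient vertex, gaining the missing one or two units via Lemma~\ref{l2}. This counting step is the second place where care is needed.
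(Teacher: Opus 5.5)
\textbf{There is a genuine gap: the case $r=a$.} Your handling of $r \le a-1$ when $G'$ is cubic is correct, and so is $r \le a-3$ when $G'$ is the non-cubic component. The case $r=a$ is left open, and it is also all that your ``few configurations'' in Case~2 amount to. What closes it is a one-line counting bound, not local structure.

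If $r=a$, no component was taken whole, so $G'$ is a smallest component of $G$. Since $G$ is disconnected and has at most $4a-1$ vertices, $|V(G')| \le \lfloor (4a-1)/2\rfloor = 2a-1$. As $N(Y) \subseteq V(G')\setminus Y$, you get $|N(Y)| \le (2a-1)-a = a-1$ at once. This holds for every admissible $Y$, whether $G'$ is cubic, non-cubic, or acyclic, and it is exactly how the paper finishes.

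Your breadth-first plan does not reach this. It aims to exhibit a \emph{suitably chosen} $Y$ with $s\ge 2$ or $p_2\ge 1$. Even if completed, that would show only that \emph{some} greedy set works, whereas the lemma asserts it for \emph{every} greedy set. For a given greedy $Y$, the rule guarantees only one cycle, which gives only $|N(Y)|\le a$ via Lemma~\ref{l3}(3).

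In Case~2 the plan for $r\ge a-2$ would not close as written either. For $r=a$, a cycle or a single deficient vertex yields at best $|N(Y)|\le a$, still one short. What you need to observe is that $r\in\{a-2,a-1\}$ cannot occur when $G'$ is the non-cubic component. In that case only $a-r\le 2$ vertices were taken as whole components, and a component with at most two vertices is not cubic, contradicting uniqueness of the non-cubic component.

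The paper uses the same observation in the other direction: for $r\in\{a-2,a-1\}$ the last part $Y$ lies in a cubic component, so it contains all but at most one vertex of some cycle. Thus Case~2 with $r\ge a-2$ reduces to $r=a$, which the counting bound handles.

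A minor point: the other unchosen components have \emph{at least}, not more than, $|V(G')|$ vertices.
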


\begin{proof}
 Let $X$ be any greedy $a$-vertex subset of $G$ and $Y$ be a connected $r$-vertex part of $X$ that was selected last. Then $|N(X)| = |N(Y)|$. If $N(Y) = \emptyset$, there is nothing to prove; otherwise, we show that $|N(Y)| \leq a - 1$. If $r \leq a - 3$, this follows from Lemma~\ref{l3}(1). If $r = a - 2$ ($r = a - 1$), then $Y$ lies in a cubic component of $G$, because the vertices chosen before form a whole noncubic component. Thus~$Y$ contains all but at most one (all) vertices of some cycle and $|N(Y)| \leq a - 1$ by Lemma~\ref{l3}(2) (by Lemma~\ref{l3}(3)). Finally, if $r= a$, then  all vertices of $X$ belong to a smallest component of $G$ on at most $2a-1$ vertices and thus $|N(Y)| \leq a - 1$.
\end{proof}

\subsection{The upper bound}

We now show that the right-hand side of~(\ref{f2}) is an upper bound for all $3 \leq m \leq n$, and characterize when the bound $2m + n + 2$ is attained.

\begin{lemma}\label{l6}
For all $3 \leq m \leq n$, we have $R(K_{1,4},K_{m,n}) \leq 2m + n + 2$. Equality holds if and only if there exists a cubic $(2m + n + 1)$-vertex graph with girth at least~$m + 2$.
\end{lemma}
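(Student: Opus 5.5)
We argue entirely through the Observation: $R(K_{1,4},K_{m,n})\le N$ holds precisely when no $N$-vertex subcubic graph is $(m,N-m-n)$-free. The whole statement therefore comes down to determining when $N$-vertex subcubic graphs can be $(m,b)$-free for $b=m+2$ and $b=m+1$.

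\emph{Upper bound.} Take $N=2m+n+2$, so $N-m-n=m+2$. By Lemma~\ref{l3}(1) with $a=m\ge 3$ and $N\ge a$, no subcubic graph is $(m,m+2)$-free. Hence $R(K_{1,4},K_{m,n})\le 2m+n+2$.

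\emph{Characterizing equality.} Equality holds exactly when some $(2m+n+1)$-vertex subcubic graph $G$ is $(m,m+1)$-free, since then $N-m-n=m+1$.

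For sufficiency, let $G$ be cubic on $2m+n+1$ vertices with $g(G)\ge m+2$. Then $g(G)\ge 5$, and $3\le m\le g(G)-2$. Lemma~\ref{l4} then says $G$ is $(m,m+1)$-free, which shows $R>2m+n+1$.

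For necessity, suppose $G$ is a $(2m+n+1)$-vertex subcubic $(m,m+1)$-free graph. Lemma~\ref{l3}(2) with $a=m$ applies, because $2m+n+1\ge m$. It shows that if $\delta(G)\le 2$ or $g(G)\le m+1$, then $G$ is not $(m,m+1)$-free. So $G$ must be cubic with girth at least $m+2$, which is the required graph. The parity of $2m+n+1$ is handled automatically: if it is odd, no cubic graph of that order exists, and necessity forces $\delta(G)\le2$, a contradiction.

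\emph{Main obstacle.} The only delicate point is that Lemma~\ref{l3}(2) must apply to an arbitrary, possibly disconnected, graph, including the case where $G$ is acyclic with no deficient vertex. That case cannot occur: an acyclic graph has a vertex of degree at most $1$, so $\delta(G)\le 2$. The condition ``$g(G)\le m+1$ or $\delta(G)\le 2$'' therefore covers every non-cubic graph and every cubic graph of girth at most $m+1$, and the lemma's component-filling argument, as in the proof of Lemma~\ref{l3}(3), gives an $(m,(m+1)^-)$-set in each case. I would check that this argument needs only $n\ge a$, which holds here.
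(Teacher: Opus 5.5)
Your proof is correct and uses the same three ingredients as the paper: Lemma~\ref{l3}(1) for the bound $2m+n+2$, Lemma~\ref{l4} to show that a cubic $(2m+n+1)$-vertex graph of girth at least $m+2$ is $(m,m+1)$-free, and Lemma~\ref{l3}(2) to show that no other $(2m+n+1)$-vertex subcubic graph is $(m,m+1)$-free. The only difference is organizational: you prove the upper bound unconditionally first, whereas the paper obtains it separately within each direction of the equivalence.
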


\begin{proof}
Suppose first that there exists a cubic $(2m + n + 1)$-vertex graph $G$ with girth at least $m + 2$. By Lemma~\ref{l4}, $G$ is $(m,m + 1)$-free. Thus $R(K_{1,4},K_{m,n}) > 2m + n + 1.$ On the other hand, by Lemma~\ref{l3}(1), there is no $(m,m + 2)$-free subcubic graph on $2m + n + 2$ vertices. Therefore, $R(K_{1,4},K_{m,n}) \leq 2m + n + 2$ and the equality holds.

Conversely, suppose that no cubic graph on $2m + n + 1$ vertices has girth at least $m+2$. Lemma~\ref{l3}(2) implies that every subcubic $(2m + n + 1)$-vertex graph $H$ is not $(m,m + 1)$-free, thus $R(K_{1,4},K_{m,n}) \leq 2m + n + 1.$

\end{proof}

\section{Main results}

\subsection{The lower bounds}

In this subsection, we obtain the constructions for the lower bounds. Most of them follow from Lemmas~\ref{lower1} and~\ref{lower2}; the others were found by computer search and are given in \verb|sparse6| format in Table~\ref{table1}.

\begin{lemma}\label{lower1}
For every $a \geq 5$, a near-cubic graph is $(a,a)$-free if it has girth at least $a+1$ and its degree-$2$ vertex, if any, does not belong to an $(a+1)$-cycle.
\end{lemma}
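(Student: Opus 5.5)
We argue by contradiction, following the proof of Lemma~\ref{l4}. Let $G$ be as in the statement and suppose $U$ is an $(a,a^-)$-set. Since $g(G)\ge a+1>a$, the graph $G[U]$ is a forest, so $s=0$ and $d_0=d_1=0$. Lemma~\ref{l2} then gives $|N(U)|=a+2c-(d_2+p_2+2p_3)$. Hence we must show $d_2+p_2+2p_3\le 2c-1$, where $d_2\le 1$ counts the degree-$2$ vertex $v$ exactly when $v\in U$. We split into cases on the number $c$ of components of $G[U]$, as in Lemma~\ref{l4}.

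\emph{Case $c=1$.} Here $G[U]$ is a tree on $a$ vertices, and we need $d_2+p_2+2p_3\le 1$.
\begin{itemize}
\item A vertex $w\notin U$ with two neighbours in $U$ closes a cycle on at most $a+1$ vertices. Since the girth is at least $a+1$, this cycle has exactly $a+1$ vertices. So it contains all of $U$, $G[U]$ is a path, and $w$ is adjacent to both ends of this path.
\item In particular $p_3=0$, since three neighbours on a path would give a shorter cycle.
\item Two such vertices $w_1,w_2$ would both be adjacent to the two endpoints of the path, which creates a $4$-cycle. This is impossible because $a\ge5$. Hence $p_2\le1$.
\item If $p_2=1$ and $d_2=1$, then $v\in U$ lies on an $(a+1)$-cycle, which is excluded by hypothesis.
\end{itemize}
Thus $d_2+p_2+2p_3\le1$ in this case.

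\emph{Case $c=2$.} Let the components be $U_1,U_2$; we need $d_2+p_2+2p_3\le 3$.
\begin{itemize}
\item A vertex outside $U$ with two neighbours in the same $U_i$ would give a cycle on at most $|U_i|+1\le a$ vertices. So every outside vertex has at most one neighbour in each component, whence $p_3=0$.
\item It therefore suffices to rule out $p_2\ge 3$ with $d_2=1$, and also $p_2\ge4$. Take three vertices $w_1,w_2,w_3$ with neighbours in both $U_1$ and $U_2$, and form the three cycles $C_{ij}=w_iP^1_{ij}w_jP^2_{ij}w_i$ through the trees $U_1,U_2$.
\item Each cycle has at least $a+1$ vertices, so the total is at least $3a+3$.
\item In the Steiner subtree of $U_\ell$ spanned by the three attachment points, each vertex lies on at most two of the three pairwise paths, except possibly one branching vertex. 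Counting multiplicities therefore gives at most $2a+2+6=2a+8$.
\item This yields $a\le5$, a contradiction for $a\ge 6$.
\end{itemize}

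\emph{Main obstacle: the tight case $a=5$.} Here we must use equality throughout. Both trees are subdivided stars with a branching vertex, and all three cycles are $6$-cycles. I would check directly that this forces a shorter cycle or a $w_i$ with two neighbours in one $U_\ell$. Alternatively, I would use the fourth vertex $w_4$ (or the fact that $v\in U$ then lies on a $6$-cycle $C_{ij}$, which is forbidden) to reach a contradiction.

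\emph{Case $c\ge3$.} Here I would reuse the structural argument of Lemma~\ref{l4}, Case~2. Set $H=G[N[U]]$, which has at most $2a$ vertices because $|N(U)|\le a$.
\begin{itemize}
\item Counting edges between $U$ and $N(U)$ gives at least $a+2c-d_2$ of them. Together with the $a-c$ edges inside $U$, this gives $|E(H)|\ge 2a+c-1\ge |V(H)|+2$.
\item Since $g(H)\ge a+1$ and $|V(H)|\le 2a$, no two cycles of $H$ are disjoint.
\item Extract, as before, a minimal subgraph $H''$ with $|E(H'')|=|V(H'')|+2$. It must be a subdivision of $K_4$, or else it contains two disjoint cycles.
\item The four cycles through three branch vertices have total length at least $4(a+1)$. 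But each edge lies in exactly two of them, so the total is at most $2|E(H'')|\le 2(2a+2)$.
\end{itemize}
This comparison is tight, so I expect to finish with the slack coming from $c\ge 3$ (or from $d_2$ and the degree-$2$ vertex hypothesis). This bookkeeping, together with the $a=5$ subcase above, is where I expect the real work to be.
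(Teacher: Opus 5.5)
\textbf{There is a genuine gap: the proposal stops exactly where the lemma has content.} Your cases $c=1$ and $c=2$ with $a\ge 6$ are correct. For $c\ge 3$, however, the count borrowed from Lemma~\ref{l4} only gives $4(a+1)\le 2|E(H'')|\le 4a+4$, which is no contradiction, and you supply no replacement. This is not a marginal case. For cubic $G$, the paper's argument shows that a hypothetical $(a,a^-)$-set $U$ would have $G[U]$ a perfect matching, i.e.\ $c=a/2\ge 3$, so every potential counterexample lives here.

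To finish along your route you would have to analyse equality.
\begin{itemize}
\item Equality forces $|N(U)|=a$ and makes $H''$ a spanning $K_4$-subdivision of $N[U]$ whose four cycles all have exactly $a+1$ vertices.
\item If $v\in U$, then $v$ is a subdivision vertex of $H''$, so it lies on an $(a+1)$-cycle, which is excluded.
\item Otherwise $|E(H)|\ge 2a+c\ge 2a+3>|E(H'')|$. The extra edge is necessarily a chord between two subdivision vertices of $H''$, and you must show it always closes a cycle on at most $a$ vertices.
\item This does hold. The branch paths of opposite edges of $K_4$ have equal lengths, say $p,q,r$, with $p+q+r=a+1$. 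A chord between two vertices of one of the four cycles closes a cycle on at most $(a+3)/2$ vertices.
\item A chord between the two paths of length $p$ closes a cycle on at most $a+2-\max(q,r)$ vertices. Here $\max(q,r)\ge 2$, because the $4$-cycle of $K_4$ avoiding those two edges has $2(q+r)\ge a+1$ vertices.
\end{itemize}
None of this is in your proposal.

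\textbf{The case $a=5$, $c=2$ is also left open.} Your first suggestion cannot work, because the equality configuration really occurs at girth $6$. In the Heawood graph, take vertices $u,z$ at distance $3$, joined by three internally disjoint paths $u\,w_i\,y_i\,z$, and let $U=\{u,z,y_1,y_2,y_3\}$. Then $c=2$ and $p_2=3$, with no shorter cycle. What you need instead has two parts:
\begin{itemize}
\item In the equality case every vertex of $U$ lies on one of the $6$-cycles $C_{ij}$, which excludes $d_2=1$.
\item You must exclude $p_2\ge 4$ separately. A one-vertex component has at most three neighbours $w_i$. With a $2+3$ split, four $w_i$ force a $5$-cycle.
\end{itemize}

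\textbf{How the paper avoids this.} Applying Lemma~\ref{l4} to $U\setminus\{u\}$ for every $u\in U$ shows three things: $|N(U)|=a$, every vertex of $N(U)$ has at least two neighbours in $U$, and every vertex of $U$ has a neighbour in $U$. The $U$--$N(U)$ edges then form a $2$-regular bipartite graph, which by the girth condition is a Hamiltonian $2a$-cycle. The $a/2\ge 3$ matching edges of $G[U]$ are chords of it that create a short cycle. The degree-$2$ vertex is handled by suppressing it and reusing Lemma~\ref{l4} in the resulting cubic graph.
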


\begin{proof}
Suppose that a graph $G$ satisfying the hypotheses has an $(a,a^-)$-set $U$. First, suppose that $G$ is cubic. Lemma~\ref{l4} implies that, for every vertex $u \in U$, $U \setminus \{u\}$ is an $(a-1,(a+1)^+)$-set, thus $|N(U \setminus \{u\})| > |N(U)|$. This is possible only if the following three properties hold: $|N(U)| = a$, every vertex of $U$ has a neighbor in $U$ and every vertex of $N(U)$ has at least two neighbors in $U$ (if, for example, a vertex $x \in N(U)$ has only one neighbor, $u$, in $U$, then $|N(U \setminus \{u\})| \leq |N(U)|$, a contradiction). Consider the bipartite subgraph $B$ of $G$ with partite sets $U$ and $N(U)$ and all edges from $U$ to $N(U)$ that $G$ has. Every vertex from $N(U)$ has at least two neighbors in $U$ and every vertex of $U$ has at most two neighbors in $N(U)$; hence the equality $|U| = |N(U)|$ implies that $B$ is $2$-regular and $G[U]$ is $1$-regular. Furthermore, $g(B) \geq g(G) > a$, hence $B$ is a Hamiltonian $2a$-cycle. Its spanning supergraph $G[N[U]]$ contains $a/2 \geq 3$ additional edges from $G[U]$, thus it has an $a^-$-cycle; a contradiction with $g(G) \geq a + 1$.

Suppose that $G$ is near-cubic and $v$ is its only degree-$2$ vertex with neighbors $w_1$ and $w_2$. Consider a cubic graph $G'$ obtained from $G$ by suppressing $v$. Then $g(G') \geq a + 1$ and the argument above implies that $G'$ is $(a,a)$-free. If $v \in U$, Lemma~\ref{l4} implies that $U' = U \setminus \{v\}$ is an $(a-1,(a+1)^+)$-set in $G'$. Furthermore, every external neighbor of $U'$ in $G'$ is an external neighbor of $U$ in $G$, thus $U$ is an $(a,(a+1)^+)$-set in $G$, a contradiction. If $v \notin U$, then $U$ is an $(a,(a+1)^+)$-set in $G'$, hence it is also an $(a,(a+1)^+)$-set in $G$, again a contradiction.
\end{proof}

\begin{lemma}\label{lower2}
For every $a \geq 7$, a near-cubic graph is $(a,a-1)$-free if it has girth at least $a$ and its degree-$2$ vertex, if any, does not belong to an $a$-cycle.
\end{lemma}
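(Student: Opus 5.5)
The plan is to follow the template of Lemma~\ref{lower1}: first treat a cubic graph $G$, then reduce the near-cubic case to it by suppressing the degree-$2$ vertex. The new ingredient is that the smaller sets are now controlled by Lemma~\ref{lower1} itself rather than by Lemma~\ref{l4}.

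\emph{Cubic case: forced structure.} Suppose $G$ is cubic with $g(G)\ge a$ and $U$ is an $(a,(a-1)^-)$-set. Since $g(G)\ge (a-1)+1$ and $a-1\ge 6\ge 5$, Lemma~\ref{lower1} shows that $G$ is $(a-1,a-1)$-free. Hence $|N(U\setminus\{u\})|\ge a>|N(U)|$ for every $u\in U$. Deleting $u$ from $U$ adds at most one new external neighbor, namely $u$ itself, and only if $u$ has a neighbor in $U$. It also removes every vertex of $N(U)$ whose only $U$-neighbor is $u$. Therefore three properties are forced:
\begin{itemize}
\item $|N(U)|=a-1$;
\item every vertex of $U$ has a neighbor in $U$;
\item every vertex of $N(U)$ has at least two neighbors in $U$.
\end{itemize}

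\emph{Cubic case: counting.} Write $e=|E(G[U])|$. There are $3a-2e$ edges between $U$ and $N(U)$, and this number is at least $2(a-1)$, so $e\le (a+2)/2$. Now count the subgraph $H=G[N[U]]$. It has at most $2a-1$ vertices and at least $e+(3a-2e)=3a-e\ge (5a-2)/2$ edges. Hence its cyclomatic number is at least roughly $a/2\ge 4$ for $a\ge 7$.

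\emph{Cubic case: the contradiction (main obstacle).} It remains to show that a graph on at most $2a-1$ vertices with cyclomatic number at least $4$ cannot have girth at least $a$. Two disjoint cycles would already need at least $2a$ vertices, which is impossible. So, as in Case~2 of Lemma~\ref{l4}, I would take an inclusion-minimal subgraph $H''\subseteq H$ with $|E(H'')|=|V(H'')|+t$ for suitable $t$. Such an $H''$ has minimum degree at least $2$ and contains no two disjoint cycles.

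With $t=2$, $H''$ is a subdivision of $K_4$. Its four "triangle" cycles have total length at least $4a$, and each edge lies in exactly two of them, so $|E(H'')|\ge 2a$. On the other hand $|E(H'')|\le 2a+1$, so this alone is not a contradiction.

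This is where I expect the main difficulty. My first attempt would use $t=3$ (or $t=4$), which the surplus cyclomatic number permits. I would then classify the minimal subgraphs of minimum degree at least $2$ with no two disjoint cycles. These are essentially subdivisions of $K_5^-$-like cores, $K_{3,3}$, wheels, or graphs with a vertex meeting all cycles. For each I would find a family of cycles covering every edge a bounded number of times, whose total length forces more than $2a-1$ vertices.

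If that becomes messy, the alternative is to use the precise structure directly. The three forced properties give that the bipartite graph $B$ between $U$ and $N(U)$ has all $N(U)$-degrees at least $2$, and that $G[U]$ is a forest of components with at least $2$ vertices each. From this I would exhibit an explicit short cycle through two vertices of $N(U)$ and the trees of $G[U]$, in the spirit of Case~1 of Lemma~\ref{l4}.

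\emph{Near-cubic case.} Let $v$ be the degree-$2$ vertex and let $G'$ be obtained by suppressing $v$. Cycles of $G$ not through $v$ are unchanged. Cycles through $v$ have length at least $a+1$, since $v$ lies on no $a$-cycle, and lose exactly one vertex. Hence $g(G')\ge a$, and the cubic case shows that $G'$ is $(a,a-1)$-free.
\begin{itemize}
\item If $v\in U$, then $U\setminus\{v\}$ is an $(a-1)$-set in $G'$. It has at least $a$ external neighbors there by Lemma~\ref{lower1} applied to $G'$. All of these are external neighbors of $U$ in $G$, so $|N(U)|\ge a$.
\item If $v\notin U$, then $|N_G(U)|\ge |N_{G'}(U)|\ge a$.
\end{itemize}
In both cases $U$ is not an $(a,(a-1)^-)$-set, which finishes the proof modulo the obstacle above.
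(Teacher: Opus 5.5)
\paragraph{Verdict.} There is a genuine gap in the cubic case. Your forced-structure step (via Lemma~\ref{lower1}) and your reduction of the near-cubic case to the cubic case match the paper. However, the contradiction in the cubic case is exactly the step you call the main obstacle, and neither sketch closes it.

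\paragraph{Your intermediate claim is false.} You reduce to the claim that a graph on at most $2a-1$ vertices with cyclomatic number at least $4$ cannot have girth at least $a$. Take $a=8$ and subdivide every edge of $K_{3,3}$ once. The result has $15=2a-1$ vertices, girth $8=a$, cyclomatic number $18-15+1=4$, and no two disjoint cycles. So your minimal-subgraph plan with $t=3$ can never give a contradiction. Your counting actually gives more: $|E(H)|-|V(H)|\ge\lceil a/2\rceil\ge 4$. You would have to use that full surplus together with subcubicity, and then classify graphs without two disjoint cycles in Lov\'asz style. That classification is not carried out. The alternative ``explicit short cycle'' route is also only a gesture.

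\paragraph{How the paper finishes.} The missing idea is to keep the degree information rather than just counting edges.
\begin{enumerate}
\item Your three properties say that $G[N[U]]$ has minimum degree at least $2$ and maximum degree $3$. Every vertex of $U$ has degree $3$ there, so by parity there are at least $8$ degree-$3$ vertices.
\item The degree-$2$ vertices of $G[N[U]]$ lie in $N(U)$ and have both neighbours in $U$. Since $g(G)\ge 7$, suppressing all of them yields a simple cubic graph $H$ on $2k\ge 8$ vertices.
\item Let $C$ be a shortest cycle of $H$. Then $|C|\le k$. Otherwise the $|C|$ outgoing edges of $C$ hit at most $k-1$ outside vertices. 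Two vertices of $C$ would then share an outside neighbour, giving a shorter cycle, since $|C|\ge 5$.
\item Hence $H-V(C)$ has $2k-|C|$ vertices and $3k-2|C|\ge 2k-|C|$ edges, so it contains a cycle disjoint from $C$.
\item These two disjoint cycles lift to disjoint cycles of $G[N[U]]$, each with at least $a$ vertices. This contradicts $|N[U]|=2a-1$.
\end{enumerate}
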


\begin{proof}
Suppose that a graph $G$ satisfying the hypotheses has an $(a,(a-1)^-)$-set $U$. First, suppose that $G$ is cubic. For every vertex $u \in U$, we have $|N(U \setminus \{u\})| > |N(U)|$ by the previous lemma. Again, this implies that $|N(U)| = a - 1$, every vertex of $U$ has a neighbor in $U$ and every vertex of $N(U)$ has at least two neighbors in $U$. Therefore, the subgraph $G[N[U]]$ is subcubic with minimum vertex degree at least $2$. Every vertex from $U$ has degree $3$ in $G[N[U]]$, thus $G[N[U]]$ has at least $8$ degree-$3$ vertices. Let $H$ be a $2k$-vertex cubic graph ($k \geq 4$) obtained from $G[N[U]]$ by suppressing all its degree-$2$ vertices. Note that $H$ is simple, because $g(G) \geq 7$, all suppressed vertices have both neighbors in $U$ and, furthermore, two degree-$2$ vertices cannot have two common neighbors, otherwise they would form a $4$-cycle in $G$. We prove that $H$ contains two vertex-disjoint cycles. Let $C$ be a shortest cycle of $H$. If $|C| \geq k + 1$, then there are two vertices of $C$ with a common neighbor outside $C$, which is impossible for $k \geq 4$. Thus $|C| = k - r$ for some $0 \leq r \leq k - 3$ and there are $2(k-r)$ edges incident to vertices of $C$. Hence a $(k+r)$-vertex subgraph $H[V(H) \setminus V(C)]$ has at least $k + 2r$ edges, so it has a cycle $C'$ such that $C$ and $C'$ are disjoint. These cycles correspond to disjoint cycles in $G[N[U]]$, which is a contradiction, since $g(G[N[U]]) \geq a$ and $|N[U]| < 2a$.

Suppose that $G$ is near-cubic and $v$ is its only degree-$2$ vertex with neighbors $w_1$ and $w_2$. Consider a cubic graph $G'$ obtained from $G$ by suppressing $v$. Then $g(G') \geq a$ and the argument above implies that $G'$ is $(a,a-1)$-free. If $v \in U$, Lemma~\ref{lower1} implies that $U' = U \setminus \{v\}$ is an $(a-1,a^+)$-set in $G'$. Furthermore, every external neighbor of $U'$ in $G'$ is an external neighbor of $U$ in $G$, thus $U$ is an $(a,a^+)$-set in $G$, a contradiction. If $v \notin U$, then $U$ is an $(a,a^+)$-set in $G'$, hence also an $(a,a^+)$-set in $G$, again a contradiction.
\end{proof}

\begin{lemma}\label{m4}
There exists a $12$-vertex cubic $(4,4)$-free graph.
\end{lemma}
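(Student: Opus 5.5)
The plan is to exhibit an explicit graph and verify $(4,4)$-freeness by a short case analysis built on Lemma~\ref{l2}. Lemma~\ref{lower1} needs $a \geq 5$, and Lemma~\ref{l4} covers $a=4$ only when the girth is at least $6$. By Lemma~\ref{t} no $12$-vertex $(3,6^+)$-graph exists. So I would take a $12$-vertex cubic graph of girth exactly $5$, which exists by Lemma~\ref{t}(1), and choose it so that its $5$-cycles are arranged suitably. Concretely, I would list a candidate by an explicit edge set (or in \verb|sparse6| form, as in Table~\ref{table1}) and then verify that every $4$-vertex set $U$ satisfies $|N(U)| \geq 5$.

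For the verification, note first that since $g(G)=5$, the graph $G[U]$ is a forest, so $s=0$ and $d_i=0$ in Lemma~\ref{l2}. Hence
\[
|N(U)| = 4 + 2c - (p_2 + 2p_3),
\]
and we must show $p_2 + 2p_3 \leq 2c-1$ for each possible forest type of $G[U]$.

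\textbf{Case $c=1$.} Here $G[U]$ is a path $P_4$ or a star $K_{1,3}$, and we need $p_2+2p_3 \leq 1$.
\begin{itemize}
\item For $K_{1,3}$: two leaves of the star having a common outside neighbour would form a $4$-cycle, so $p_2=p_3=0$.
\item For $P_4 = u_1u_2u_3u_4$: a common outside neighbour of two vertices at distance at most $2$ in the path creates a $4^-$-cycle. So the only possible overlap is a common neighbour of $u_1$ and $u_4$, which closes a $5$-cycle. This gives $p_2 \leq 1$ and $p_3 = 0$ (a vertex adjacent to three path vertices would create a short cycle).
\end{itemize}
So connected sets are automatically fine in any girth-$5$ cubic graph.

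\textbf{Cases $c \geq 2$.} The remaining types are $P_3+K_1$, $2K_2$, $K_2+2K_1$ and $4K_1$, needing $p_2+2p_3 \leq 3, 3, 5, 7$ respectively. Girth $5$ bounds the overlaps. An outside vertex is adjacent to at most one vertex of each component, except that it may see both ends of $P_3$ only if this creates a $4$-cycle, which is excluded. Two components can share at most a few common neighbours, since two outside vertices each joined to the same pair of components produce short cycles. This gives crude bounds. The tight configurations are those in which $U$ together with its common neighbours spans several $5$-cycles through few vertices, for instance an independent $4$-set whose neighbourhoods heavily overlap. Such sets must be excluded by the specific choice of graph.

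The main obstacle is exactly this disconnected case, particularly $c=4$ and $c=2$: it depends on the particular graph and cannot follow from the girth alone. I would handle it by using the symmetry of the chosen graph (for example a vertex-transitive or highly symmetric girth-$5$ graph on $12$ vertices) to reduce the $\binom{12}{4}=495$ subsets to a few orbit representatives and checking those by hand. Failing that, I would report a direct computer check of all $495$ subsets, consistent with how the paper treats the other computer-found constructions.
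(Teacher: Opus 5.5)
\textbf{Verdict: genuine gap.} Your strategy is the same as the paper's: the paper's entire proof is the explicit graph $H_{12}$ from Table~\ref{table1} together with a direct check of its $4$-subsets, and your reduction via Lemma~\ref{l2} is sound. The gap is that you never produce a graph. For a pure existence statement the witness \emph{is} the proof. Saying you would list a candidate and check it by hand or by computer leaves the lemma unproved. You also explicitly leave open the one case in which the choice of graph matters.

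That case is narrower than you indicate, and $c=4$ is not an obstacle. In a cubic graph of girth $5$, two vertices have at most one common neighbour and adjacent vertices have none. So an outside vertex with $k$ neighbours in $U$ uses up $\binom{k}{2}$ distinct nonadjacent pairs of $U$. This gives the following:
\begin{itemize}
\item For $4K_1$: $p_2+3p_3\le 6$.
\item For $K_2+2K_1$: $p_2+3p_3\le 5$.
\item For $P_3+K_1$: only the three pairs through the isolated vertex are usable, and $p_3=0$.
\end{itemize}
Hence $c\ge 3$ and $P_3+K_1$ follow from girth alone.

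The only possible $(4,4^-)$-set is therefore $U=\{a,b,c,d\}$ with $G[U]=2K_2$ on edges $ab,cd$, where all four cross pairs $ac,ad,bc,bd$ have common neighbours. Equivalently, some edge $ab$ lies on two $5$-cycles that meet only in $ab$, and the vertices of these cycles opposite $ab$ are adjacent. This does occur at girth $5$: in the Petersen graph, any two edges with no edge between them form such a set. So a concrete graph must be checked.

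Decoding $H_{12}$ gives the $9$-cycle $0,1,\dots,8$ plus vertices $9,10,11$ adjacent to $\{0,3,6\}$, $\{1,4,7\}$ and $\{2,5,8\}$ respectively. It is not vertex-transitive, so the reduction via a vertex-transitive graph that you suggest is not available. However, the map $i\mapsto i+1 \pmod 9$, $9\mapsto 10\mapsto 11\mapsto 9$ is an automorphism, which leaves only two edge orbits to check.
\begin{itemize}
\item The $5$-cycles through $01$ are $(0,8,11,2,1)$, $(0,8,7,10,1)$ and $(0,9,3,2,1)$. Only the last two meet just in $\{0,1\}$, and their opposite vertices $7$ and $3$ are nonadjacent.
\item The $5$-cycles through $09$ are $(0,1,2,3,9)$ and $(0,8,7,6,9)$, whose opposite vertices $2$ and $7$ are nonadjacent.
\end{itemize}
Hence $H_{12}$ is $(4,4)$-free. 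A named witness with this check is exactly what your plan needed to supply.
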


\begin{proof}
Consider the graph $H_{12}$; its \verb|sparse6| representation is given in Table~\ref{table1}. Direct computation shows that every 4-vertex subset of $H_{12}$ has at least five external neighbors, hence $H_{12}$ is $(4,4)$-free.
\end{proof}

\begin{lemma}\label{m5}
For all $n \geq 5$, there exists an $(n + 10)$-vertex subcubic $(5,5)$-free graph.
\end{lemma}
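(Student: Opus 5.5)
The plan is to take $a = 5$ in Lemma~\ref{lower1} and use the existence result of Lemma~\ref{t}(2) for cubic graphs of girth at least $6$. Write $N = n + 10$, so $N \geq 15$. By Lemma~\ref{lower1} with $a = 5$, it suffices to exhibit, for every $N \geq 15$, an $N$-vertex near-cubic graph that satisfies two conditions:
\begin{enumerate}
\item it has girth at least $6$;
\item its degree-$2$ vertex, if it has one, lies on no $6$-cycle.
\end{enumerate}
Such a graph is subcubic, so it gives the required $(5,5)$-free graph. I split according to the parity of $N$.

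\emph{$N$ even.} Here $N \geq 16 \geq 14$, so Lemma~\ref{t}(2) gives an $N$-vertex cubic graph of girth at least $6$. A cubic graph is near-cubic with no degree-$2$ vertex, so Lemma~\ref{lower1} applies directly.

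\emph{$N$ odd.} Then $N - 1 \geq 14$ is even, and Lemma~\ref{t}(2) gives an $(N-1)$-vertex cubic graph $G$ of girth at least $6$; for $N = 15$ this is the Heawood graph. Subdivide any edge of $G$. By Lemma~\ref{l1}(1) with $g = 6$ (note $N \geq 10$), the resulting $N$-vertex graph is near-cubic with girth at least $6$, and its degree-$2$ vertex lies on no $6$-cycle. Lemma~\ref{lower1} then shows it is $(5,5)$-free.

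There is no real obstacle. The only point to check is that the parameters match: Lemma~\ref{lower1} asks that the degree-$2$ vertex avoid $(a+1)$-cycles, which for $a = 5$ means $6$-cycles, and this is exactly what Lemma~\ref{l1}(1) gives for $g = 6$. The smallest case $N = 15$ needs a $14$-vertex cubic graph of girth at least $6$, and the threshold $14$ in Lemma~\ref{t}(2) is exactly why the statement requires $n \geq 5$.
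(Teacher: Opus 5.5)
Your proof is correct and follows the paper's argument: for odd order you subdivide an edge of a graph from Lemma~\ref{t}(2) via Lemma~\ref{l1}(1), and in all cases you conclude with Lemma~\ref{lower1} at $a=5$. One small correction to your closing remark: the construction also works for $N=14$ (the Heawood graph itself), so the hypothesis $n \geq 5$ comes from the convention $m \leq n$ with $m=5$, not from the threshold $14$ in Lemma~\ref{t}(2).
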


\begin{proof}
Lemma~\ref{t}(2) and Lemma~\ref{l1}(1) imply that for every $n \geq 5$ there is an $(n+10)$-vertex near-cubic graph of girth at least $6$ such that its degree-$2$ vertex, if any, does not belong to a $6$-cycle. Such a graph is $(5,5)$-free by Lemma~\ref{lower1}.
\end{proof}

\begin{lemma}\label{m6}
The following hold:

1. For all $6 \leq n \leq 11$, there exists an $(n + 11)$-vertex subcubic $(6,5)$-free graph.

2. For all $n \geq 12$, there exists an $(n + 12)$-vertex subcubic $(6,6)$-free graph.
\end{lemma}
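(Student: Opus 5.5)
Both parts follow the pattern of Lemma~\ref{m5}: take a near-cubic graph with suitable girth, given by Lemma~\ref{t} together with Lemma~\ref{l1}(1), and certify freeness with Lemma~\ref{lower1} or Lemma~\ref{lower2}. The only difficulty is that Lemma~\ref{lower2} is stated for $a \geq 7$, while part~1 needs $a = 6$. Where the general lemmas do not reach, I would fall back on explicit graphs found by computer and listed in Table~\ref{table1}.

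\textbf{Part 2.} Let $n \geq 12$, so the target order $N = n+12$ satisfies $N \geq 24$. If $N$ is even, Lemma~\ref{t}(3) gives an $N$-vertex cubic graph of girth at least $7$. If $N$ is odd, then $N-1 \geq 24$ is even, so Lemma~\ref{t}(3) gives an $(N-1)$-vertex cubic graph of girth at least $7$. Subdividing one of its edges and applying Lemma~\ref{l1}(1) yields an $N$-vertex near-cubic graph of girth at least $7$ whose degree-$2$ vertex lies on no $7$-cycle. In both cases Lemma~\ref{lower1} with $a = 6$ shows the graph is $(6,6)$-free. The range $n \geq 12$ is exactly what makes $N \geq 24$ (or $N-1 \geq 24$), so this part is immediate.

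\textbf{Part 1.} Let $6 \leq n \leq 11$, so the target order $N = n+11$ ranges over $17,\dots,22$. The natural idea is to mimic the proof of Lemma~\ref{lower2} with $a = 6$. For even $N \geq 14$, Lemma~\ref{t}(2) gives a cubic graph of girth at least $6$. For odd $N$, subdividing an $(N-1)$-vertex such graph gives, by Lemma~\ref{l1}(1), a near-cubic graph whose degree-$2$ vertex avoids $6$-cycles. I would then try to show these graphs are $(6,5)$-free.

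Rerunning the argument of Lemma~\ref{lower2} goes as follows. Lemma~\ref{lower1} with $a = 5$ applies, since the girth is at least $6$. As in that proof, a $(6,5^-)$-set $U$ would force $|N(U)| = 5$, with every vertex of $N(U)$ having at least two neighbours in $U$. Then $G[N[U]]$ has $11$ vertices and minimum degree at least $2$. Suppressing its degree-$2$ vertices gives a cubic graph $H$ with at least $6$ vertices. The disjoint-cycles argument there used $k \geq 4$ and $g \geq 7$ (so that $H$ is simple), so it must be redone here.

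This is the step I expect to fail. Girth $6$ allows $U \cup N(U)$ to contain two $6$-cycles sharing a path, for example a subdivided theta graph on $11$ vertices. Such configurations can genuinely give $(6,5)$-sets, for instance in the Heawood graph. So generic girth-$6$ graphs will not do. Instead I would use specific graphs: one $(n+11)$-vertex subcubic graph for each $n = 6,\dots,11$, found by computer search and recorded in Table~\ref{table1}. Each would be verified like Lemma~\ref{m4}, by directly checking that every $6$-subset has at least $6$ external neighbours. I would also reduce the number of graphs needed. Adding an isolated vertex to a $(6,5)$-free graph never helps, but deleting a vertex can break freeness, so no monotonicity is available. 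I therefore expect to simply list six graphs, or fewer if some case is covered by a girth-$7$-type construction that Lemma~\ref{lower2}'s reasoning can handle.
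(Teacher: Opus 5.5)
Your proposal is correct and follows the paper's proof: Part~2 is exactly Lemma~\ref{t}(3) combined with Lemma~\ref{l1}(1) and Lemma~\ref{lower1} at $a=6$, and Part~1 is settled, as in the paper, by the computer-verified witnesses $H_{17},\ldots,H_{22}$ of Table~\ref{table1}. Your diagnosis that the argument of Lemma~\ref{lower2} fails at $a=6$ is right (e.g.\ in the Heawood graph the six vertices at distance at least $2$ from a $3$-path form a $(6,5^-)$-set), but your aside that adding an isolated vertex never helps is too strong, though the proof does not depend on it: if the base graph is cubic of girth at least $6$, Lemma~\ref{lower1} with $a=5$ makes it $(5,5)$-free, so every $6$-set containing the new isolated vertex still has at least $6$ external neighbors.
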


\begin{proof}
1. The graphs $H_{17},H_{18},\ldots,H_{22}$ (see Table~\ref{table1}) are $(6,5)$-free.

2. Apply Lemma~\ref{t}(3), Lemma~\ref{l1}(1) and Lemma~\ref{lower1}.
\end{proof}

\begin{table}[h!]
\small
\centering
\begin{tabular}{|c|l|}
\hline
Graph & Representation in \verb|sparse6| \\
\hline
$H_{12}$  &  \verb|:K`ESyTl?{@eaO{ag| \\ \hline
$H_{17}$  &  \verb|:Pa@abcd_E_`abGcHdGIeHJfIK_@| \\ \hline
$H_{18}$ & \verb|:Q_`ab_Cde`FgbHeIjaKfLcMhN_KO|   \\ \hline
$H_{19}$ & \verb|:Ra@ab_Cde`FgbHeIjaKfLcMhN_KO_@|  \\ \hline
$H_{20}$ & \verb|:S___``aabbcEcGdFdeLfGhIhKjMPlNO| \\ \hline
$H_{21}$ & \verb|:Ta?_``aabbcEcGdFdeLfGhIhKjMPlNO_@| \\ \hline
$H_{22}$ & \verb|:U_`abcdefg_H_`abJcKdLeMfNgJOhKPiLQ| \\ \hline
$H_{23}$ & \verb|:V_b?``aabbcEcGdEdHfGfHiMjkNPlO_ArSpT| \\ \hline
$H_{24}$ & \verb|:W_`abcde`FghcI_JkfLbMniOePaQlRhSdT_OU| \\ \hline
$H_{25}$  &  \verb|:Xa?_fEegfkCHdIcJdJcdO`EJbGPaFObIMaHN`KL_@| \\ \hline
$H_{26}$ & \verb|:Y_`abcdefghi_J_`abcdLeMfNgLOhMPiNQjORkPS| \\ \hline
$H_{28}$ & \verb|:[___fCdechIdeKfgMgfchOePdQ`IKaLMbJN`OPbSTaQR|   \\ \hline
$H_{30}$ & \verb|:]___``aabbccddeeffgghhiMQiOSjNRjPTkMTkORlNSlPQ|  \\ \hline
$H_{32}$ & \verb|:__`abcdefghijkl_M_`abcdeOfPgQhRiOSjPTkQUlRVmSWnTX| \\ \hline
$H_{33}$ & \verb|:`_OGKEDA`owcQJDbPg{]PGcqXSiUOHqx_{hKGdXhYYPHra@U[QIRqpacWL~| \\ \hline
\end{tabular}
\caption{Witness graphs for Lemma~\ref{m4} and Lemmas~\ref{m6}--\ref{m912}}
\label{table1}
\end{table}

\begin{lemma}\label{m7}

The following hold:

\begin{enumerate}
\item There exists a $20$-vertex subcubic $(7,5)$-free graph.

\item For $n = 7$ and all $n \geq 9$, there exists a subcubic $(n+13)$-vertex $(7,6)$-free graph.

\item For $n \in \{16,17\}$ and $n \geq 20$, there exists a subcubic $(n+14)$-vertex $(7,7)$-free graph.
\end{enumerate}

\end{lemma}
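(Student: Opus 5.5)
Each of the three items follows the pattern of Lemmas~\ref{m5} and~\ref{m6}: where Lemma~\ref{t} supplies cubic graphs of the right girth, we combine it with Lemma~\ref{l1}(1) and Lemma~\ref{lower1} or Lemma~\ref{lower2}; otherwise we use an explicit witness from Table~\ref{table1}.

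\emph{Item 3.} We need $(n+14)$-vertex $(7,7)$-free graphs. By Lemma~\ref{lower1} with $a=7$, it suffices to find a near-cubic graph of girth at least $8$ whose degree-$2$ vertex, if any, lies on no $8$-cycle. By Lemma~\ref{t}(4), cubic $(3,8^+)$-graphs exist on $30$ vertices and on every even number of vertices $\geq 34$.
\begin{itemize}
\item If $n+14$ is even, this gives the cubic graph directly for $n=16$ and for even $n\ge 20$.
\item If $n+14$ is odd, we subdivide an edge of a cubic graph on $n+13$ vertices, using Lemma~\ref{l1}(1). This needs $n+13\in\{30\}\cup\{34,36,\dots\}$, which covers $n=17$ and odd $n\ge 21$.
\end{itemize}
Together these give exactly $n\in\{16,17\}$ and all $n\ge 20$, which explains the gap at $n=18,19$.

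\emph{Item 2.} We need $(n+13)$-vertex $(7,6)$-free graphs. Lemma~\ref{lower2} with $a=7$ requires a near-cubic graph of girth at least $7$ whose degree-$2$ vertex lies on no $7$-cycle. By Lemma~\ref{t}(3), cubic $(3,7^+)$-graphs exist for every even order $\ge 24$.
\begin{itemize}
\item If $n+13$ is even, we use such a graph directly; this requires $n+13\ge 24$.
\item If $n+13$ is odd, we subdivide an edge of a cubic graph on $n+12$ vertices; this requires $n+12\ge 24$, i.e.\ $n\ge 12$.
\end{itemize}
So the lemma-based construction covers all $n\ge 11$. The remaining cases $n=7,9,10$ need graphs on $20$, $22$ and $23$ vertices. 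For these we cite the computer-found witnesses $H_{20}$, $H_{22}$ and $H_{23}$ from Table~\ref{table1}, and check by direct computation that every $7$-subset has at least $7$ external neighbours.

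\emph{Item 1.} We need a $20$-vertex $(7,5)$-free graph. No general lemma applies here, so we again use an explicit witness. Presumably $H_{20}$ is in fact $(7,5)$-free, which would also give the $n=7$ case of item~2, since $(7,5)$-free implies $(7,6)$-free.

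\emph{Main obstacle.} The main difficulty is the finite cases $n=7,9,10$ of item~2 and item~1. They rely on computer verification of the table graphs, which cannot be done by hand in general. For the $22$- and $23$-vertex cases, I would first try a hand argument: take a $22$-vertex cubic graph of girth $6$ with few $6$-cycles and adapt the disjoint-cycles argument of Lemma~\ref{lower2}. Failing that, we simply invoke direct computation, as in Lemma~\ref{m4}.
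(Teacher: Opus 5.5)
Your argument is the paper's own proof. The infinite ranges come from Lemma~\ref{t}(3) or~(4), combined with Lemma~\ref{l1}(1) and Lemma~\ref{lower2} or Lemma~\ref{lower1}. Your parity bookkeeping is correct and produces exactly the gap $n\in\{18,19\}$ in item~3. The finite cases $n=7,9,10$ of item~2, and item~1, are settled by direct computation on $H_{20},H_{22},H_{23}$.

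One correction: your remark that $(7,5)$-free implies $(7,6)$-free is backwards. A graph is $(a,b)$-free if it has no $a$-set with \emph{at most} $b$ external neighbours. So $(7,6)$-free (every $7$-set has at least $7$ external neighbours) is the stronger property, and it implies $(7,5)$-free, not conversely.

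Consequently item~1 needs no ``presumably'' and no separate witness argument. It follows immediately from the $n=7$ case of item~2 once $H_{20}$ is verified to be $(7,6)$-free. The deduction you suggest in the opposite direction would be invalid. Since you check $H_{20}$ for item~2 directly anyway, the proof as a whole is unaffected.
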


\begin{proof}
1. The graph $H_{20}$ (see Table~\ref{table1}) is $(7,5)$-free.

2. The graphs  $H_{20},H_{22},H_{23}$ (see Table~\ref{table1}) are $(7,6)$-free. For $n \geq 11$, apply Lemma~\ref{t}(3), Lemma~\ref{l1}(1) and Lemma~\ref{lower2}.

3. Apply Lemma~\ref{t}(4), Lemma~\ref{l1}(1) and Lemma~\ref{lower1}.
\end{proof}

\begin{lemma}\label{m8}

The following hold:

\begin{enumerate}
\item For all $n \in \{8, 10, 12, 14\}$, there exists a subcubic $(n + 14)$-vertex $(8,6)$-free graph.

\item For all $n \in \{9,11,13\}$ and $n \geq 15$, there exists a subcubic $(n+15)$-vertex $(8,7)$-free graph.

\item For $n \geq 42$, there exists a subcubic $(n+16)$-vertex $(8,8)$-free graph.
\end{enumerate}

\end{lemma}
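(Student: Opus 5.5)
The plan mirrors the proofs of Lemmas~\ref{m6} and~\ref{m7}: the small orders come from the computer-found witness graphs in Table~\ref{table1}, and the infinite families come from Lemma~\ref{t}, Lemma~\ref{l1}(1), and Lemmas~\ref{lower1}/\ref{lower2}.

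\textbf{Statement 1.} For $n \in \{8,10,12,14\}$ we need $(8,6)$-free subcubic graphs on $22, 24, 26, 28$ vertices. I would take the witnesses $H_{22}, H_{24}, H_{26}, H_{28}$ from Table~\ref{table1}. In each, one checks by direct computation that every $8$-vertex subset has at least $7$ external neighbors.

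\textbf{Statement 2.} We need $(8,7)$-free subcubic graphs on $n+15$ vertices. For $n\in\{9,11,13\}$ these are the orders $24, 26, 28$.
\begin{itemize}
\item By Lemma~\ref{t}(3), cubic graphs of girth at least $7$ exist on every even order at least $24$. Lemma~\ref{l1}(1) then gives near-cubic graphs of girth at least $7$ on every order at least $24$; the even orders are cubic, and subdivision gives the odd orders $N\ge 25$. This covers $n\ge 9$ except that it yields only girth $\ge 7$.
\item Lemma~\ref{lower2} with $a=8$ needs girth at least $8$, and Lemma~\ref{lower1} with $a=7$ gives only $(7,7)$-freeness. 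So this route does not work, and I would use girth-$8$ graphs instead.
\item By Lemma~\ref{t}(4) and Lemma~\ref{l1}(1), near-cubic graphs of girth at least $8$ whose degree-$2$ vertex avoids $8$-cycles exist on $30$, $31$ and all orders $N\ge 34$. Lemma~\ref{lower2} with $a=8$ makes them $(8,7)$-free. This covers $n+15\in\{30,31\}\cup[34,\infty)$, i.e.\ $n\in\{15,16\}$ and $n\ge 19$.
\item The remaining orders are $n+15\in\{24,26,28,32,33\}$, i.e.\ $n\in\{9,11,13,17,18\}$. For these I would use the table graphs $H_{24},H_{26},H_{28},H_{32},H_{33}$ and verify $(8,7)$-freeness by computer.
\end{itemize}
The fact that $H_{32}$ and $H_{33}$ appear in the table is consistent with exactly these gaps.

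\textbf{Statement 3.} We need $(8,8)$-free graphs on $n+16\ge 58$ vertices. Lemma~\ref{t}(5) gives cubic graphs of girth at least $9$ on every even order at least $58$. Subdividing an edge via Lemma~\ref{l1}(1) gives near-cubic graphs of girth at least $9$ on every order at least $59$, whose degree-$2$ vertex lies on no $9$-cycle. Lemma~\ref{lower1} with $a=8$ then gives $(8,8)$-freeness.

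\textbf{Main obstacle.} The main work is the case bookkeeping in Statement 2: making sure that girth-$8$ graphs (not girth $7$) are used for $a=8$, and that every order missed by Lemma~\ref{t}(4) is covered by a verified table graph. The table verifications themselves are finite computations.
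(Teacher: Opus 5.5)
Your proposal is correct and essentially identical to the paper's proof. Part 1 uses the table graphs $H_{22},H_{24},H_{26},H_{28}$. Part 2 uses $H_{24},H_{26},H_{28},H_{32},H_{33}$ for $n\in\{9,11,13,17,18\}$, and Lemma~\ref{t}(4), Lemma~\ref{l1}(1) and Lemma~\ref{lower2} with $a=8$ for $n\in\{15,16\}$ and $n\ge 19$. Part 3 uses Lemma~\ref{t}(5), Lemma~\ref{l1}(1) and Lemma~\ref{lower1} with $a=8$, and your identification of orders $32$ and $33$ as the gaps left by Lemma~\ref{t}(4) matches the paper's choice of witnesses.
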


\begin{proof}

1. Direct computation shows that the graphs $H_{22}$, $H_{24}, H_{26},H_{28}$ are $(8,6)$-free.

2. Direct computation shows that the graphs $H_{24}, H_{26}, H_{28},H_{32},H_{33}$ are $(8,7)$-free. For $n \in \{15,16\}$ and all $n \geq 19$, apply Lemma~\ref{t}(4), Lemma~\ref{l1}(1) and Lemma~\ref{lower2}.

3. Apply Lemma~\ref{t}(5), Lemma~\ref{l1}(1) and Lemma~\ref{lower1}.
\end{proof}

\begin{lemma}\label{m912}
The following hold:

1. There exists a $25$-vertex subcubic $(9,7)$-free graph.

2. There exists a $28$-vertex subcubic $(10,8)$-free graph.

3. There exists a $30$-vertex subcubic $(11,8)$-free graph.

4. There exists a $32$-vertex subcubic $(12,8)$-free graph.
\end{lemma}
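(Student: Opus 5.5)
The plan is to use the witness graphs of Table~\ref{table1}, as in Lemmas~\ref{m6}--\ref{m8}. No infinite family is needed here, since the orders $25,28,30,32$ are far below the orders $58$ and $70$ from Lemma~\ref{t} at which $(3,9^+)$- and $(3,10^+)$-graphs appear. Lemmas~\ref{lower1} and~\ref{lower2} therefore cannot be applied directly to a cage-type graph. Instead I would take $H_{25}$ for the first statement, $H_{28}$ for the second, $H_{30}$ for the third and $H_{32}$ for the fourth, and certify the required freeness property for each.

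The certification is a direct computation: for each graph and each $a$-vertex subset $U$ ($a=9,10,11,12$), check $|N(U)| \geq b+1$ with $b=7,8,8,8$ respectively. The search can be cut down using the reductions already in the paper.
\begin{itemize}
\item By Lemma~\ref{l2}, $|N(U)|$ is determined by the numbers of components, deficient vertices, cycles and shared outside neighbours of $U$.
\item By the minimality argument used in the proofs of Lemmas~\ref{lower1} and~\ref{lower2}, it is enough to examine sets in which every vertex of $N(U)$ has at least two neighbours in $U$; otherwise, deleting a vertex of $U$ gives a smaller bad set.
\item Complementation also helps. For a $32$-vertex cubic graph and $|U|=12$, the set $V\setminus N[U]$ has at most $12$ vertices, so the check can equivalently be run on small sets.
\end{itemize}

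As a sanity check before the computation, I would confirm that each witness has the structure forced by Lemma~\ref{l3}. It must be near-cubic. Its girth must exceed $a$ minus a small slack: for example, Lemma~\ref{l3}(3) forbids a cycle of length at most $a$ whenever $b \ge a$, but here $b<a$, so only weaker restrictions apply. Any two deficient vertices must be far apart. These checks explain why such graphs can exist even though the girth is below $a+1$.

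The main obstacle is that the proof is essentially computational. I would accept it as ``direct computation shows'', matching Lemmas~\ref{m4} and~\ref{m8}. The enumeration itself is small: at most $\binom{32}{12}\approx 2.3\times 10^8$ subsets, fewer after restricting to sets with few components. A human-readable argument would instead need a structural analysis of $G[N[U]]$ in the spirit of the proof of Lemma~\ref{lower2}, and I would not attempt that.
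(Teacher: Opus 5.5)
Your choice of witnesses $H_{25},H_{28},H_{30},H_{32}$, certified by exhaustive computation, is exactly the paper's proof: it states only that direct computation shows these graphs satisfy (1)--(4). However, two of the search reductions you list are invalid. If you used them, the computer check would no longer be a proof.

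\emph{The minimality reduction.} In Lemmas~\ref{lower1} and~\ref{lower2}, deleting $u$ yields a contradiction only because of an extra fact. Lemma~\ref{l4} (respectively Lemma~\ref{lower1}), through the girth hypothesis, guarantees that every $(a-1)$-set has more than $|N(U)|$ external neighbours. In general the deletion step gives only this: if some $x\in N(U)$ has a single neighbour $u\in U$, then $U\setminus\{u\}$ is an $(a-1,b^-)$-set. Your witnesses have girth well below $a$, and such sets genuinely occur. By Theorem~\ref{t8}(1) with $n=10$ and the Observation preceding Lemma~\ref{l2}, every $25$-vertex subcubic graph, $H_{25}$ included, contains an $(8,7^-)$-set. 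So nothing justifies restricting to sets in which every external neighbour has two neighbours in $U$. To keep the reduction you would also have to examine all one-vertex extensions of $(a-1,b^-)$-sets.

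\emph{The complementation bullet.} Here the inequality is reversed. If $|U|=12$ and $|N(U)|\le 8$ in a $32$-vertex graph, then $W=V\setminus N[U]$ has \emph{at least} $12$ vertices. The relation $N(W)\subseteq N(U)$ only produces another large set with a small boundary, so it does not reduce the search to small sets.

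\emph{The sanity checks.} Near-cubicity and girth close to $a$ are not consequences of Lemma~\ref{l3} when $b\le a-2$, since that lemma only produces sets with at least $a$ neighbours' worth of bound. They should therefore not be used to reject a candidate witness.

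\emph{The fix.} Simply drop the prunings. The plain enumeration of all $a$-subsets is trivial, at most $\binom{32}{12}\approx 2.3\times 10^{8}$, and with that your argument coincides with the paper's. If you do want a correct speed-up, note that $U$ is an $(a,b^-)$-set exactly when it is a union of components of $G-S$ for some $b$-set $S$ with $S\cap U=\emptyset$. You can therefore enumerate all $b$-sets $S$ and solve a subset-sum problem on the component sizes of $G-S$.
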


\begin{proof}
Direct computation shows that the graphs $H_{25},H_{28},H_{30},H_{32}$ (see Table~\ref{table1}) satisfy the conditions $(1)$--$(4)$, respectively.
\end{proof}

\subsection{Case $m \leq 8$}

The following trivial fact was first mentioned in~\cite{Har1}.

\begin{theorem}\label{t1}
For every $n \geq 1$, $R(K_{1,4},K_{1,n}) = n + 3 + (n \bmod 2)$.
\end{theorem}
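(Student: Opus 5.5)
We use the Observation with $m=1$: $R(K_{1,4},K_{1,n})$ is the smallest $N \geq n+1$ such that no $N$-vertex subcubic graph is $(1,N-1-n)$-free. A single vertex $u$ has $|N(\{u\})| = \deg(u)$. So a subcubic graph is $(1,b)$-free exactly when its minimum degree is at least $b+1$. Since degrees are at most $3$, an $N$-vertex subcubic $(1,N-1-n)$-free graph exists if and only if $N-n \leq 3$ and there is an $N$-vertex graph of maximum degree at most $3$ and minimum degree at least $N-n$. (When $N-n\le 0$ the condition is vacuous and the empty graph works.)

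For the upper bound, take $N = n+4$. Then we would need minimum degree at least $4$ in a subcubic graph, which is impossible. Hence $R \leq n+4$. If $n$ is odd, take $N = n+3$. We would need a cubic graph on $n+3$ vertices, an odd number, which contradicts the handshake lemma. Hence $R \leq n+3$ when $n$ is odd.

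For the lower bound when $n$ is even, take $N = n+3$, which is odd and at least $4$. We need an $(n+3)$-vertex subcubic graph with minimum degree at least $2$, i.e.\ the complement must avoid $K_{1,4}$ while every vertex misses at most $n$ others. The cycle $C_{n+3}$ is $2$-regular, so each vertex has exactly $n$ non-neighbors. Hence $N = n+3$ is not Ramsey, and $R \geq n+4$. For $n$ odd, $N = n+2$ is odd and hence at least $3$; the cycle $C_{n+2}$ has minimum degree $2 \geq 1$. Alternatively, one can argue directly: $N=n+2$ requires minimum degree at least $2$, and the cycle provides it. This gives $R \geq n+3$.

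These bounds give $n+3+(n \bmod 2)$, since for $n$ even the value is $n+4$. There is one bookkeeping point to check. The formula as stated yields $n+3$ for $n$ even and $n+4$ for $n$ odd, which is the opposite of what I derived above. So the parity must be rechecked: for odd $n$, $n+3$ is even, so a cubic graph on $n+3$ vertices exists (for example $K_4$ when $n=1$, or a prism or Möbius ladder in general). That graph is $(1,2)$-free, so $R \geq n+4$. For even $n$, $n+3$ is odd, so no cubic graph exists and $R \le n+3$; $C_{n+2}$ gives the lower bound $R \geq n+3$.

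The correct assignment, matching the formula, is therefore the following. Lower bounds come from a cubic graph on $n+3$ vertices (odd $n$) or the cycle $C_{n+2}$ (even $n$). Upper bounds come from the degree bound (odd $n$) or the handshake parity argument (even $n$). The only delicate point is this parity bookkeeping, together with the small case $n=1$, where $K_4$ serves as the cubic graph.
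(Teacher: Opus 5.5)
Your final argument, in the last two paragraphs, is correct and is the natural proof. The paper states this theorem without proof, attributing it to Harary. Your reduction via the Observation to a minimum-degree condition, together with handshake parity and the cubic-graph and cycle witnesses, is the same reasoning the paper uses for $m \geq 3$ in Lemma~\ref{l6} and Theorem~\ref{t3}.

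In a write-up, delete the second and third paragraphs rather than retracting them afterwards. Besides the parity slip, the claim that $C_{n+3}$ shows $N=n+3$ is not Ramsey is wrong for every $n$. At $N=n+3$ each vertex may have at most $n-1$ non-neighbours, so minimum degree $3$ is required, whereas each vertex of $C_{n+3}$ has exactly $n$ non-neighbours.
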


The next fact was proven in~\cite{Par} for $n = 2$. For $n \geq 3$, it follows from a general result by Stevens~\cite{Stev}.

\begin{theorem}\label{t2}
For every $n \geq 2$, $R(K_{1,4},K_{2,n}) = n + 5 + (n \bmod 2)$.
\end{theorem}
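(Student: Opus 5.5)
We prove the formula directly from the Observation with $m=2$. Writing $N$ for the number of vertices, $R(K_{1,4},K_{2,n})$ is the least $N \ge n+2$ such that no $N$-vertex subcubic graph is $(2,N-n-2)$-free. Everything therefore reduces to counting external neighbors of $2$-vertex sets $U=\{u,v\}$. By Lemma~\ref{l2}, or directly,
$$|N(U)| = \deg u + \deg v - 2[uv\in E(G)] - t,$$
where $t$ is the number of common neighbors of $u$ and $v$. Put $\varepsilon = n \bmod 2$. We need an upper bound at $N=n+5+\varepsilon$ and a lower-bound construction on $n+4+\varepsilon$ vertices.

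\textbf{Upper bound.}
\begin{itemize}
\item If $n$ is odd, then $N=n+6$ and we must show that no subcubic graph is $(2,4)$-free.
\begin{itemize}
\item If $G$ has an edge $uv$, the formula gives $|N(\{u,v\})|\le 3+3-2=4$.
\item If $G$ is edgeless, every pair has no external neighbors.
\end{itemize}
\item If $n$ is even, then $N=n+5$ is odd and we must show that no $N$-vertex subcubic graph is $(2,3)$-free.
\begin{itemize}
\item Since $N$ is odd, $G$ cannot be cubic, so it has a deficient vertex $u$.
\item If $u$ has a neighbor $v$, then $|N(\{u,v\})|\le 2+3-2=3$.
\item If $u$ is isolated, then pairing it with any $v$ gives $|N(\{u,v\})|\le 3$.
\end{itemize}
\end{itemize}
In both cases the graph contains a $(2,(N-n-2)^-)$-set, which proves $R(K_{1,4},K_{2,n})\le n+5+\varepsilon$.

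\textbf{Lower bound.} We exhibit an $(n+4+\varepsilon)$-vertex subcubic graph that is $(2,2+\varepsilon)$-free, meaning every pair has at least $3+\varepsilon$ external neighbors.

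\begin{itemize}
\item \emph{Case $n$ even.} We need a graph on $n+4\ge 6$ vertices (an even number) in which every pair has at least $3$ external neighbors. Take any triangle-free cubic graph on $n+4$ vertices; such graphs exist for every even order at least $6$, for example $K_{3,3}$ and the prisms $C_k\square K_2$ with $k\ge4$.
\begin{itemize}
\item An adjacent pair has no common neighbor, so it has exactly $4$ external neighbors.
\item A nonadjacent pair has at most $3$ common neighbors, so it has at least $3$ external neighbors.
\end{itemize}
\item \emph{Case $n$ odd.} We need a graph on $n+5\ge 8$ vertices (an even number) in which every pair has at least $4$ external neighbors. Take the prism $C_k\square K_2$ with $2k=n+5$, so $k\ge 4$ (for $k=4$ this is the cube $Q_3$). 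It is cubic and triangle-free.
\begin{itemize}
\item Adjacent pairs have no common neighbor, so they have $4$ external neighbors.
\item Two nonadjacent vertices of the prism have at most $2$ common neighbors, so they have at least $4$ external neighbors.
\end{itemize}
\end{itemize}
Hence $R(K_{1,4},K_{2,n})> n+4+\varepsilon$.

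The only point requiring a little care is the claim that two vertices of $C_k\square K_2$ never have three common neighbors, i.e.\ that the prism contains no $K_{2,3}$. This follows by inspection: a vertex $(i,j)$ has neighbors $(i\pm1,j)$ and $(i,1-j)$. For $k\ge4$, two distinct vertices share at most two of these neighbors. Combining both bounds gives $R(K_{1,4},K_{2,n}) = n+5+(n \bmod 2)$ for all $n\ge 2$.
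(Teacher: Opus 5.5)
Your proof is correct, and it necessarily takes a different route from the paper, which gives no argument for Theorem~\ref{t2}: it cites Parsons~\cite{Par} for $n=2$ and obtains $n\ge 3$ from Stevens's general theorem~\cite{Stev}, i.e.\ from formula~(\ref{f2}). You instead give a self-contained proof inside the paper's own $(a,b)$-set framework.

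Your upper bound is the $k=2$ case of the ``furthermore'' parts of Lemma~\ref{l3}(1)--(2): an edge is a $(2,4^-)$-set, an edge at a deficient vertex is a $(2,3^-)$-set, and odd order forces a deficient vertex. You also treat the trivial isolated-vertex case.

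Your lower bound is the $m=2$ analogue of the construction in Lemma~\ref{l6}, and you handle the one point where $m=2$ differs from $m\ge 3$. Lemma~\ref{l4} needs girth at least $5$, and ``cubic with girth at least $m+2=4$'' would not suffice. For example, $K_{3,3}$ has girth $4$, but two vertices on the same side have only $3$ external neighbors, so it is $(2,2)$-free but not $(2,3)$-free. You use $K_{3,3}$ only in the even case, where $(2,2)$-freeness is all that is needed. In the odd case you switch to the $K_{2,3}$-free prisms $C_k\square K_2$, and you check correctly that no two prism vertices share three neighbors.

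The citation buys brevity and places $m=2$ within the Burr--Faudree--Rousseau--Schelp/Stevens picture. Your route buys an elementary proof that needs no external results. It shows that $m=2$ follows the same pattern the paper uses for $m\ge 3$: a parity or deficient-vertex argument for the upper bound, and a cubic graph with a suitable local condition for the lower bound.
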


We start with $m = 3$, which is the first previously open case.

\begin{theorem}\label{t3}
For every $n \geq 3$, $R(K_{1,4},K_{3,n}) = n + 7 + (n \bmod 2)$.
\end{theorem}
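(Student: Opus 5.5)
The plan is to read off both the upper and the lower bound from Lemma~\ref{l6}, Lemma~\ref{t}(1) and Lemma~\ref{l4}, splitting according to the parity of $n$. With $m=3$ the target value is $n+7+(n \bmod 2)$, which equals $n+8 = 2m+n+2$ for odd $n$ and $n+7 = 2m+n+1$ for even $n$.

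\emph{Odd $n$.} Lemma~\ref{l6} gives $R(K_{1,4},K_{3,n}) \le n+8$, with equality if and only if there is a cubic $(n+7)$-vertex graph of girth at least $5$. For odd $n \ge 3$ the number $n+7$ is even and at least $10$. Hence Lemma~\ref{t}(1) supplies such a graph, and equality holds.

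\emph{Even $n$ (so $n \ge 4$), upper bound.} Here $n+7$ is odd, so no cubic $(n+7)$-vertex graph exists at all. The equality case of Lemma~\ref{l6} therefore fails, which gives $R(K_{1,4},K_{3,n}) \le n+7$.

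\emph{Even $n$, lower bound.} By the Observation, it suffices to exhibit an $(n+6)$-vertex subcubic graph that is $(3,(n+6)-3-n)$-free, i.e.\ $(3,3)$-free. We also need $n+6 \ge m+n$, which clearly holds. Since $n+6$ is even and at least $10$, Lemma~\ref{t}(1) gives a cubic $(n+6)$-vertex graph $G$ with $g(G)\ge 5$. Applying Lemma~\ref{l4} with $a=3 \le g(G)-2$ shows that $G$ is $(3,4)$-free, and in particular $(3,3)$-free. Hence $R(K_{1,4},K_{3,n}) > n+6$, and the two bounds together give equality.

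There is no real obstacle in this argument. The only point requiring care is the index bookkeeping in the Observation: the required neighbourhood bound is $N-m-n$ evaluated at $N=n+6$, which is $3$. We must also check that the hypotheses of Lemma~\ref{l4}, namely $g\ge 5$ and $3\le a\le g-2$, hold for $a=3$; they do, since $g(G)\ge 5$.
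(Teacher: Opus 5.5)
Your proposal is correct and matches the paper's proof essentially step for step. The parity of $n+7$ together with Lemma~\ref{t}(1) settles the equality case of Lemma~\ref{l6}. For even $n$, the lower bound comes from a cubic $(n+6)$-vertex graph of girth at least $5$, which Lemma~\ref{l4} shows is $(3,4)$-free and hence $(3,3)$-free.
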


\begin{proof}
Since every cubic graph has even order, Lemma~\ref{t}(1) implies that, for every $n \geq 3$, a cubic graph on $n + 7$ vertices with girth at least $5$ exists if and only if $n$ is odd. Hence, by Lemma~\ref{l6}, $R(K_{1,4},K_{3,n}) = n + 8$
for every odd $n \geq 3$, while 
$R(K_{1,4},K_{3,n}) \leq n + 7$ for every even $n \geq 4$. It remains to prove that, for every even $n \geq 4$, there exists an $(n+6)$-vertex subcubic $(3,3)$-free graph. Indeed, Lemma~\ref{t}(1) implies that for every even $N \geq 10$, there exists a cubic $N$-vertex graph with girth at least 5. Lemma~\ref{l4} implies that every such graph is $(3,4)$-free, and hence $(3,3)$-free.
\end{proof}

\begin{theorem}\label{t4}
For every $n \geq 4$, $R(K_{1,4},K_{4,n}) = n + 9 + (n \bmod 2)$.
\end{theorem}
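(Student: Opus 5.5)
We follow the pattern of the proof of Theorem~\ref{t3}, using Lemma~\ref{l6} with $m=4$. This lemma says $R(K_{1,4},K_{4,n}) = n+10$ exactly when there is a cubic $(n+9)$-vertex graph of girth at least $6$. Otherwise $R(K_{1,4},K_{4,n}) \le n+9$.

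For odd $n \ge 5$, the number $n+9$ is even and at least $14$. Lemma~\ref{t}(2) then supplies a cubic $(n+9)$-vertex $(3,6^+)$-graph, so $R(K_{1,4},K_{4,n}) = n+10 = n+9+(n \bmod 2)$.

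For even $n \ge 4$, the number $n+9$ is odd, so no cubic graph on $n+9$ vertices exists, and Lemma~\ref{l6} gives the upper bound $R(K_{1,4},K_{4,n}) \le n+9$. For the matching lower bound, the Observation says we must exhibit an $(n+8)$-vertex subcubic graph that is $(4, (n+8)-4-n)$-free, i.e.\ $(4,4)$-free.

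For the smallest case $n=4$ we need a $12$-vertex such graph, and Lemma~\ref{m4} provides $H_{12}$. For even $n \ge 6$, the number $N=n+8 \ge 14$ is even, so Lemma~\ref{t}(2) gives a cubic $N$-vertex graph of girth at least $6$. Such a graph is $(4,5)$-free by Lemma~\ref{l4} with $a=4 \le g-2$, hence $(4,4)$-free. Alternatively, Lemma~\ref{lower1} requires $a \ge 5$, so Lemma~\ref{l4} is the right tool here.

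The only subtle point is $n=4$: there $n+8=12<14$, so no girth-$6$ cubic graph is available, and we must rely on the computer-found graph $H_{12}$ of Lemma~\ref{m4}. Everything else is a direct combination of Lemmas~\ref{t}, \ref{l4} and \ref{l6}, exactly as in Theorem~\ref{t3}.
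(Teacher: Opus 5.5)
Your proposal is correct and follows the paper's proof: for $n \geq 5$ you argue exactly as in Theorem~\ref{t3}, using Lemma~\ref{t}(2) together with Lemmas~\ref{l6} and~\ref{l4}. For $n=4$ you take the upper bound from Lemma~\ref{l6} and the lower bound from the $12$-vertex $(4,4)$-free graph $H_{12}$ of Lemma~\ref{m4}, which is the paper's argument.
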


\begin{proof}
For $n \geq 5$, the proof is analogous to that of Theorem~\ref{t3}. For $n = 4$, the upper bound follows from Lemma~\ref{l6}, while the lower bound follows from Lemma~\ref{m4}.
\end{proof}

\begin{theorem}\label{t5}
The following hold:

\begin{enumerate}
\item $R(K_{1,4},K_{5,n}) = n + 11$ for $5 \leq n \leq 12$;

\item $R(K_{1,4},K_{5,n}) = n + 11 + (n \bmod 2)$ for $n \geq 13$.
\end{enumerate}
\end{theorem}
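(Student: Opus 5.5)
The plan is to sandwich $R(K_{1,4},K_{5,n})$ between an upper bound from Lemma~\ref{l6} and a lower bound from Lemma~\ref{m5}. The distinction between the two ranges of $n$ comes only from Lemma~\ref{t}(3), which says for which orders a cubic graph of girth at least $7 = m+2$ exists.

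\emph{Upper bound.} With $m=5$, Lemma~\ref{l6} gives $R(K_{1,4},K_{5,n}) \leq n+12$. Equality holds if and only if there is a cubic $(n+11)$-vertex graph with girth at least $7$. Otherwise, the proof of Lemma~\ref{l6} (via Lemma~\ref{l3}(2)) gives $R(K_{1,4},K_{5,n}) \leq n+11$.
\begin{itemize}
\item If $n$ is even, then $n+11$ is odd, so no such cubic graph exists.
\item If $n$ is odd and $5 \leq n \leq 12$, then $n+11 \leq 23 < 24$, so by Lemma~\ref{t}(3) no such graph exists.
\item If $n$ is odd and $n \geq 13$, then $n+11$ is even and at least $24$, so Lemma~\ref{t}(3) provides such a graph and $R(K_{1,4},K_{5,n}) = n+12$.
\end{itemize}
Hence $R(K_{1,4},K_{5,n}) \leq n+11$ in every remaining case.

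\emph{Lower bound.} By the Observation, $R(K_{1,4},K_{5,n}) > N$ as soon as there is an $N$-vertex subcubic $(5, N-5-n)$-free graph. Take $N = n+10$, so that $N-5-n = 5$. Lemma~\ref{m5} provides an $(n+10)$-vertex subcubic $(5,5)$-free graph for every $n \geq 5$. Therefore $R(K_{1,4},K_{5,n}) \geq n+11$ for all $n \geq 5$.

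\emph{Conclusion.} Combining the two bounds:
\begin{itemize}
\item for $5 \leq n \leq 12$ we get $R(K_{1,4},K_{5,n}) = n+11$;
\item for $n \geq 13$ we get $n+12$ when $n$ is odd and $n+11$ when $n$ is even, which is exactly $n+11+(n \bmod 2)$.
\end{itemize}
There is no genuine obstacle once Lemmas~\ref{t}, \ref{l6} and~\ref{m5} are available. The only care needed is the parity bookkeeping, in particular that for odd $n \leq 12$ the order $n+11$ falls below the threshold $24$ of Lemma~\ref{t}(3).
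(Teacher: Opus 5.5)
Your proof is correct and follows the paper's argument: the lower bound $n+11$ comes from Lemma~\ref{m5}, and the upper bound $n+11$ (or the exact value $n+12$ for odd $n\ge 13$) comes from Lemma~\ref{l6} combined with Lemma~\ref{t}(3) and the parity of $n+11$. The only cosmetic difference is that for even $n\ge 14$ you reuse Lemma~\ref{m5} for the lower bound, whereas the paper's ``analogous to Theorem~\ref{t3}'' route would take a cubic $(n+10)$-vertex graph of girth at least $7$ and apply Lemma~\ref{l4}; both work.
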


\begin{proof}
We prove the first statement; the second statement is proved analogously to Theorem~\ref{t3}. The lower bound follows from Lemma~\ref{m5}. By Lemma~\ref{t}(3), for every $n \leq 12$, an $(n+11)$-vertex cubic graph has girth at most~6. Hence the upper bound follows from Lemma~\ref{l6}.
\end{proof}

\begin{theorem}\label{t6}
The following hold:

\begin{enumerate}
\item $R(K_{1,4},K_{6,n}) = n + 12$ for $6 \leq n \leq 11$;

\item $R(K_{1,4},K_{6,n}) = n + 13$ for $12 \leq n \leq 16$ and for $n \in \{19,20\}$;

\item $R(K_{1,4},K_{6,n}) = n + 13 + (n \bmod 2)$ for $n \in \{17,18\}$ and for all $n \geq 21$.
\end{enumerate}
\end{theorem}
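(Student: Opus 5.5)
We compute each value by matching an upper bound with a lower bound. By the Observation, $R(K_{1,4},K_{6,n}) > N$ exactly when some $N$-vertex subcubic graph is $(6,N-6-n)$-free. Lower bounds will come from the constructions of Lemma~\ref{m6}. Upper bounds will come from Lemma~\ref{l6} together with Lemma~\ref{t}(4), which lists the orders of cubic graphs with girth at least $8$.

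\emph{Parts 2 and 3.} By Lemma~\ref{l6}, $R(K_{1,4},K_{6,n}) = n+14$ if and only if some cubic $(n+13)$-vertex graph has girth at least $8$. Otherwise the value is at most $n+13$. By Lemma~\ref{t}(4), such a graph exists exactly when $n+13 = 30$ or $n+13 \geq 34$ is even, that is, when $n = 17$ or $n \geq 21$ is odd. This gives the value $n+14$ for these $n$, which are the odd $n$ in part~3. For all remaining $n \geq 12$, Lemma~\ref{l6} gives $R(K_{1,4},K_{6,n}) \leq n+13$. These remaining $n$ are $12 \leq n \leq 16$, $n \in \{18,19,20\}$ and all even $n \geq 22$. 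The reason is that either $n+13$ is odd, or $n+13 \in \{26,28,32\}$, which is excluded by Lemma~\ref{t}(4). For these $n$, Lemma~\ref{m6}(2) supplies an $(n+12)$-vertex subcubic $(6,6)$-free graph, so $R(K_{1,4},K_{6,n}) > n+12$. Hence the value is exactly $n+13$, which covers part~2 and the even $n$ in part~3.

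\emph{Part 1.} The lower bound $R(K_{1,4},K_{6,n}) > n+11$ is Lemma~\ref{m6}(1), which gives an $(n+11)$-vertex $(6,5)$-free graph. For the upper bound we show that no subcubic graph $G$ on $N = n+12 \in [18,23]$ vertices is $(6,6)$-free.

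By Lemma~\ref{edge} we may assume that the deficient vertices of $G$ form a clique. Suppose first that $G$ has a vertex of degree at most $1$, or two deficient vertices. Since the deficient vertices form a clique, two of them would be adjacent, hence at distance $1 \leq 5$. In each of these situations Lemma~\ref{l3}(3), or a direct count with Lemma~\ref{l2} in the case of an isolated vertex (which contributes $3d_0$), gives a $(6,6^-)$-set. So we may assume that $G$ is cubic or near-cubic.

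If $G$ is cubic, it has at most $23 < 24$ vertices. By Lemma~\ref{t}(3) (with parity) it has girth at most $6$, and Lemma~\ref{l3}(3a) applies.

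The main obstacle is the near-cubic case, with a degree-$2$ vertex $v$. The $(N-1)$-vertex cubic graphs have at most $22$ vertices, so they all have girth at most $6$. By Lemma~\ref{l1}(2) with $g=7$, either $g(G) \leq 6$, and we are done as before, or $v$ lies on a $7$-cycle $C$. In the latter case, let $U = V(C)$ minus one vertex $w \neq v$. Then $U$ is a connected $6$-vertex set, so $c=1$ and $s=0$. It contains the deficient vertex $v$, so $d_2 \geq 1$. The omitted vertex $w$ has two neighbours in $U$, so $p_2 \geq 1$. Lemma~\ref{l2} then gives $|N(U)| \leq 6 + 2 - 1 - 1 = 6$. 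Hence $G$ is not $(6,6)$-free, and $R(K_{1,4},K_{6,n}) \leq n+12$.
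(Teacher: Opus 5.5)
Your proof is correct and follows the paper's route. For parts 2 and 3 you use Lemma~\ref{l6} together with Lemma~\ref{t}(4) and Lemma~\ref{m6}(2), and you spell out the parity bookkeeping that the paper leaves as ``analogous to Theorem~\ref{t3}''. For part 1 you use Lemma~\ref{edge} and Lemma~\ref{l3}(3) to reduce to the cubic or near-cubic case, where Lemma~\ref{l1}(2) forces either $g(G)\le 6$ or a $7$-cycle through $v$. Your explicit set $V(C)\setminus\{w\}$, with $d_2=p_2=1$, is a concrete version of the paper's one-line conclusion.

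One small slip: the isolated-vertex case is not settled by the $3d_0$ count alone. The isolated vertex also adds a component, so Lemma~\ref{l2} with $c=2$ gives only $|N(U)|\le 7$ for $U$ consisting of that vertex and a connected $5$-set. The fix is already in your proof. By the clique property the rest of $G$ is cubic on $N-1\le 22$ vertices, hence has girth at most $6$, so Lemma~\ref{l3}(3)(a) applies. This is how the paper's disconnected case covers it.
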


\begin{proof}
We prove the first statement. The lower bound follows from Lemma~\ref{m6}(1). We prove the upper bound. Suppose there exists an $(n+12)$-vertex subcubic $(6,6)$-free graph $G$. By Lemma~\ref{edge}, we may assume that the deficient vertices of $G$ form a clique. If $G$ is disconnected, then each of its cubic components has girth at most $6$ by Lemma~\ref{t}(3), thus $G$ is not $(6,6)$-free by Lemma~\ref{l3}(3). If $G$ is connected, but not near-cubic, then it contains either a degree-$1$ vertex or two adjacent degree-$2$ vertices, hence any connected $6$-vertex set containing all deficient vertices of $G$ is a $(6,6^-)$-set by Lemma~\ref{l2}. Finally, if $G$ is connected and near-cubic, then Lemma~\ref{t}(3) and Lemma~\ref{l1}(2) imply that it either contains a $6^-$-cycle or its degree-$2$ vertex belongs to a $7$-cycle, hence $G$ is not $(6,6)$-free, again a contradiction.

We now prove the second statement. The lower bound follows from Lemma~\ref{m6}(2). By Lemma~\ref{t}(4), for $12 \leq n \leq 16$ and $n \in \{19,20\}$, a cubic graph on $(n + 13)$ vertices has girth at most 7, hence the upper bound follows from Lemma~\ref{l6}.

The third statement is proved analogously to Theorem~\ref{t3}.
\end{proof}

\begin{theorem}\label{t7}
The following hold:

\begin{enumerate}
\item $R(K_{1,4},K_{7,n}) = n + 13$ for $n = 8$;

\item $R(K_{1,4},K_{7,n}) = n + 14$ for $9 \leq n \leq 15$ and for $n \in \{7, 18, 19\}$;
 
\item $R(K_{1,4},K_{7,n}) = n + 15$ for $n \in \{16,17\}$ and for $20 \leq n \leq 42$;

\item $R(K_{1,4},K_{7,n}) = n + 15 + (n \bmod 2)$ for $n \geq 43$.
\end{enumerate}
\end{theorem}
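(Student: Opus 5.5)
We split Theorem~\ref{t7} into its four ranges. Throughout we use the Observation: $R(K_{1,4},K_{7,n}) > N$ if and only if some $N$-vertex subcubic graph is $(7,N-7-n)$-free. Each item then needs a witness for the lower bound and a non-existence argument one vertex higher.

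\emph{Items 3 and 4.} These follow from Lemma~\ref{l6}, argued as in Theorem~\ref{t3}. Take $20 \le n \le 42$ or $n \in \{16,17\}$. Then $n+15 < 58$, so by Lemma~\ref{t}(5) no cubic $(n+15)$-vertex graph has girth at least $9$. Lemma~\ref{l6} therefore gives $R \le n+15$. The lower bound $R > n+14$ is Lemma~\ref{m7}(3). For $n \ge 43$ with $n+15$ even, Lemma~\ref{t}(5) supplies a cubic $(n+15)$-vertex graph of girth at least $9$, so Lemma~\ref{l6} gives equality at $n+16$. For $n \ge 44$ even, the upper bound $n+15$ comes from Lemma~\ref{l6} by parity. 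The lower bound $R > n+14$ for these $n$ is again Lemma~\ref{m7}(3).

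\emph{Item 2.} The lower bound $R > n+13$ is Lemma~\ref{m7}(2). For the upper bound we must show that no $(n+14)$-vertex subcubic graph $G$ is $(7,7)$-free, for $n \in \{7,9,\dots,15,18,19\}$.

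\begin{itemize}
\item By Lemma~\ref{edge} we may assume the deficient vertices of $G$ form a clique.
\item If $G$ is connected but not near-cubic, it has a degree-$1$ vertex or two adjacent degree-$2$ vertices. Then Lemma~\ref{l3}(3) gives a $(7,7^-)$-set.
\item If $G$ is cubic, then $n+14 \in \{24,26,28,32\}$. By Lemma~\ref{t}(4) no cubic graph of these orders has girth at least $8$, so Lemma~\ref{l3}(3)(a) applies.
\item If $G$ is near-cubic with degree-$2$ vertex $v$, suppressing $v$ yields a cubic graph on $n+13 \in \{20,22,\dots,28,32\}$ vertices. This graph has girth at most $7$. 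By Lemma~\ref{l1}(2) with $g=8$, either $g(G) \le 7$ (done as before) or $v$ lies on an $8$-cycle. In the latter case take the $7$-path of that cycle containing $v$ that omits one vertex. Lemma~\ref{l2} with $c=1$, $d_2=1$, $p_2=1$ gives a $(7,7)$-set.
\item If $G$ is disconnected, every cubic component has fewer than $n+14 \le 33$ vertices. Such a component has girth at most $7$ unless it is a $30$-vertex graph of girth $8$. This exceptional case can occur only for $n=19$, and then the remaining three vertices form a component of minimum degree at most $2$. In every case, a suitable component (a whole small component plus a piece, in the style of the proof of Lemma~\ref{l3}) yields a $(7,7^-)$-set.
\end{itemize}

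\emph{Item 1 ($n=8$).} The lower bound is the $20$-vertex $(7,5)$-free graph of Lemma~\ref{m7}(1). The upper bound requires that no $21$-vertex subcubic graph be $(7,6)$-free; we follow the proof of Theorem~\ref{t6}(1).

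\begin{itemize}
\item By Lemma~\ref{edge} we may again assume the deficient vertices form a clique.
\item If $G$ is disconnected, each cubic component has at most $20$ vertices. It therefore has girth at most $6$ by Lemma~\ref{t}(3), and since $21 \le 4\cdot 7-1$, Lemma~\ref{greedy} gives a $(7,6^-)$-set.
\item If $G$ is connected and not near-cubic, a connected $7$-set containing all deficient vertices works via Lemma~\ref{l2}. The clique condition makes these deficient vertices close together, so such a set exists.
\item Since $21$ is odd, $G$ cannot be cubic. If $G$ is near-cubic, suppressing $v$ gives a $20$-vertex cubic graph, so by Lemma~\ref{l1}(2) with $g=7$ either $v$ lies on a $7$-cycle or $g(G) \le 6$. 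In the first case that $7$-cycle is itself a $(7,6)$-set: Lemma~\ref{l2} with $s=1$, $d_2=1$ gives $7+2-2-1=6$.
\end{itemize}

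The main obstacle is the remaining subcase: $G$ connected, near-cubic, $21$ vertices, with a cycle $C$ of length at most $6$. A connected $7$-set containing $C$ only has at most $7$ external neighbours, so we need one more defect: $v$, a vertex with two neighbours in the set ($p_2 \ge 1$), or a second cycle. The first try is to place $C$ and $v$ in one connected $7$-set when they are close. Otherwise we do a breadth-first count around $C$: if every $7$-set extending $C$ had $7$ external neighbours and $p_2=0$, then $N_2[C]$ together with the tree-like ball around $v$ from Lemma~\ref{cyclic} would require more than $21$ vertices, a contradiction.
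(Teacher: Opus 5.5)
Your treatment of items 2--4 is essentially the paper's: Lemma~\ref{l6} with Lemma~\ref{t} for items 3 and 4, and the clique reduction, Lemma~\ref{l3}(3) and Lemma~\ref{l1}(2) with $g=8$ for item 2. There is one slip in item 2. The disconnected configuration with a $30$-vertex cubic component of girth $8$ also arises for $n=18$, leaving a $K_2$ component, not only for $n=19$. It is still disposed of by $\delta(G)=1$ and Lemma~\ref{l3}(3)(b). In item 1, the connected non-near-cubic case needs a parity remark. Since $|V(G)|=21$, the total deficiency is odd and hence at least $3$; two adjacent degree-$2$ vertices alone would leave $7$ external neighbours. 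This is easily supplied.

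The genuine gap is the subcase you flag yourself in item 1: $G$ connected and near-cubic with $g(G)\le 6$. Your sketch does not close it, and your count ``$N_2[C]$ together with the ball around $v$ exceeds $21$ vertices'' is unjustified, for two reasons:
\begin{itemize}
\item $v$ not being $7$-cyclic only gives $d(v,C)\ge 8-|C|$, so $N_3[v]$ and $N_2[C]$ may intersect.
\item For $|C|=6$, a $7$-set extending $C$ adds a single vertex, so your condition on such sets gives no control over $N_2(C)$.
\end{itemize}

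The missing observation is that $C$ plays no role. Split instead on whether $v$ is $7$-cyclic.
\begin{itemize}
\item If it is, a connected $7$-set containing $v$ and a cycle has $|N(U)|\le 7+2-(1+2)=6$ by Lemma~\ref{l2}.
\item If not, Lemma~\ref{cyclic}(1) with $k=3$ makes $G[N_3[v]]$ a $15$-vertex tree. Moreover, $N_3(v)$ is an independent $8$-set, since an edge inside it would make $v$ $7$-cyclic. Hence $16$ edges run from $N_3(v)$ into $N_4(v)$, which has at most $21-15=6$ vertices. So some $x\in N_4(v)$ has three neighbours in $N_3(v)$, and two of them lie below the same neighbour of $v$. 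The two $4$-vertex paths from $v$ to these vertices, together with $x$, span at most $7$ vertices and contain a cycle. Any connected $7$-set containing them is therefore a $(7,6^-)$-set.
\end{itemize}

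This argument also absorbs your ``$v$ on a $7$-cycle'' case, so the detour through Lemma~\ref{l1}(2) is unnecessary for item 1.
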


\begin{proof}
We prove the first statement.  The lower bound follows from Lemma~\ref{m7}(1). To prove the upper bound, we show that there is no $21$-vertex $(7,6)$-free subcubic graph. Suppose, to the contrary, that such a graph $G$ exists. By Lemma~\ref{edge}, we may assume that the deficient vertices of $G$ form a clique. If $G$ is disconnected, then each of its cubic components has girth at most $6$ by Lemma~\ref{t}(3), hence $G$ is not $(7,6)$-free by Lemma~\ref{greedy}. If $G$ is connected, but not near-cubic, then, since $|V(G)|$ is odd, it contains a degree-$1$ vertex $x$ adjacent to a degree-$2$ vertex $y$, and any $7$-vertex connected set containing $x$ and $y$ is a $(7,6^-)$-set by Lemma~\ref{l2}, a contradiction. If $G$ is connected and near-cubic, consider its only degree-$2$ vertex $v$. If $v$ is $7$-cyclic, there is a connected $(7,6^-)$-set containing $v$ together with some cycle. Otherwise Lemma~\ref{cyclic} implies that the subgraph $G[N_3[v]]$ is a $15$-vertex tree and $N_3(v)$ is an independent 8-vertex set. Furthermore, there are $16$ edges between $N_3(v)$ and $N_4(v)$. Since $|N_4(v)| \leq |V(G) \setminus N_3[v]| \leq  6$, there exists a vertex $x \in N_4(v)$ with three neighbors in $N_3(v)$. Then there exist vertices $y,z \in N_3(v) \cap N_1(x)$ such that the $4$-vertex $vy$-path and the $4$-vertex $vz$-path have at least two common vertices. Consider a connected $7$-vertex set $U$ that contains these paths together with $x$. Since $U$ has a degree-$2$ vertex and a cycle, it is a $(7,6^-)$-set by Lemma~\ref{l2}, a contradiction.

We prove the second statement.  The lower bound follows from Lemma~\ref{m7}(2). For the upper bound, suppose that there exists a subcubic $(n+14)$-vertex $(7,7)$-free graph $G$. By Lemma~\ref{edge}, we may assume that the deficient vertices of $G$ form a clique. Suppose that $G$ is disconnected. If it contains a cubic component of girth at most $7$, then it is not $(7,7)$-free by Lemma~\ref{l3}(3). Otherwise, $G$ has two components: a $30$-vertex component of girth $8$ and an $r$-vertex clique, where $r \in \{2,3\}$. Thus $G$ either contains a degree-$1$ vertex or a $3$-cycle, hence it is not $(7,7)$-free by Lemma~\ref{l3}(3). Suppose that $G$ is connected, but not near-cubic. Then any connected $7$-vertex set that contains all its deficient vertices is a $(7,7^-)$-set. Finally, if $G$ is connected and near-cubic, Lemma~\ref{t}(4) and Lemma~\ref{l1}(2) imply that either $g(G) \leq 7$ or $g(G) = 8$ and the only degree-$2$ vertex of $G$ belongs to an $8$-cycle. Hence $G$ is not $(7,7)$-free, a contradiction.

We prove the third statement. The lower bound follows from Lemma~\ref{m7}(3). By Lemma~\ref{t}(5), for $n \leq 42$, there is no cubic $(n+15)$-vertex graph with girth at least $9$, hence the upper bound follows from Lemma~\ref{l6}.

The last statement is proved analogously to Theorem~\ref{t3}.

\end{proof}

\begin{theorem}\label{t8}
The following hold:

\begin{enumerate}
\item $R(K_{1,4},K_{8,n}) = n + 15$ for $n \in \{8, 10, 12, 14\}$;

\item $R(K_{1,4},K_{8,n}) = n + 16$ for $n \in \{9,11,13\}$ and $15 \leq n \leq 41$;
 
\item $R(K_{1,4},K_{8,n}) = n + 17$ for $42 \leq n \leq 52$ and $n \in \{57,58\}$;

\item $R(K_{1,4},K_{8,n}) = n + 17 + (n \bmod 2)$ for $n \in \{53,54,55,56\}$ and $n \geq 59$.
\end{enumerate}
\end{theorem}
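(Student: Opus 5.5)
The four statements split according to whether the bound of Lemma~\ref{l6} is attained, nearly attained, or must be lowered by structural arguments. In every case the lower bound comes from Lemma~\ref{m8}, and the upper bounds use Lemma~\ref{l6}, Lemma~\ref{t}(4),(5) and the case analysis already used for Theorem~\ref{t7}.

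\emph{Statements 3 and 4 (the easy cases).} By Lemma~\ref{t}(5), a cubic graph of girth at least $10$ exists on $N$ vertices if and only if $N\in\{70,72\}$ or $N\ge 76$. So a cubic graph on $n+17$ vertices of girth at least $10$ exists exactly when $n\in\{53,55\}$ or $n\ge 59$ with $n$ odd.
\begin{itemize}
\item For these $n$, Lemma~\ref{l6} gives $R=n+18$, which is statement 4 for odd $n$.
\item For all other $n\ge 42$, Lemma~\ref{l6} gives $R\le n+17$. The matching lower bound is the $(n+16)$-vertex $(8,8)$-free graph of Lemma~\ref{m8}(3). This yields statement 3, and statement 4 for even $n$ (including $n=54,56$), exactly as in Theorem~\ref{t3}.
\end{itemize}

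\emph{Statement 2.} The lower bound is Lemma~\ref{m8}(2). For the upper bound, suppose $G$ is a subcubic $(8,8)$-free graph on $n+16\le 57$ vertices, and by Lemma~\ref{edge} assume its deficient vertices form a clique.
\begin{itemize}
\item If $G$ is disconnected, some component is cubic. By Lemma~\ref{t}(5) that component has fewer than $58$ vertices, so its girth is at most $8$, and Lemma~\ref{l3}(3) gives an $(8,8^-)$-set.
\item If $G$ is connected but not near-cubic, a connected $8$-vertex set containing all deficient vertices is an $(8,8^-)$-set by Lemma~\ref{l2}.
\item If $G$ is near-cubic, suppressing its degree-$2$ vertex gives a cubic graph on $n+15\le 56$ vertices. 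By Lemma~\ref{t}(5) and Lemma~\ref{l1}(2), either $g(G)\le 8$ or $g(G)=9$ and $v$ lies on a $9$-cycle. In the first case Lemma~\ref{l3}(3) applies.
\item In the second case, take the connected $8$-set formed by $v$ and all but one vertex of that $9$-cycle. The omitted cycle vertex has two neighbours in the set, so Lemma~\ref{l2} with $d_2=1$, $p_2\ge 1$ bounds the neighbourhood by $8+2-2=8$.
\end{itemize}

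\emph{Statement 1.} The lower bound is Lemma~\ref{m8}(1). For the upper bound, suppose $G$ is a subcubic $(8,7)$-free graph on $n+15\in\{23,25,27,29\}$ vertices, again with deficient vertices forming a clique.
\begin{itemize}
\item \emph{Disconnected.} Every cubic component has fewer than $30$ vertices, so by Lemma~\ref{t}(4) it has girth at most $7$. Since $29\le 4\cdot 8-1$, Lemma~\ref{greedy} gives an $(8,7^-)$-set.
\item \emph{Connected and near-cubic.} Let $v$ be the degree-$2$ vertex. If $v$ is $8$-cyclic, a connected $8$-set containing $v$ and a cycle has neighbourhood at most $8+2-1-2=7$ by Lemma~\ref{l2}. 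Otherwise Lemma~\ref{cyclic}(2) with $k=3$ (valid since $3\le 23/2-1$) gives $|N_3[v]|=15$ and $|N_4(v)|=16$. This is impossible, because at most $29-15=14$ vertices remain.
\item \emph{Connected but not near-cubic.} Since the order is odd, the degree-sum parity constrains the deficient clique (of size at most $3$). I expect to show it contains a degree-$1$ vertex adjacent to a degree-$2$ vertex, or three mutually adjacent degree-$2$ vertices forming a triangle. In either configuration a connected $8$-set containing the deficient vertices has neighbourhood at most $8+2-3=7$ by Lemma~\ref{l2}.
\end{itemize}

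The main obstacle is this last parity and clique case analysis: I must check that every admissible degree pattern of a deficient clique on an odd number of vertices really saves at least $3$ in Lemma~\ref{l2}. That is where the oddness of $n+15$ for even $n$ must be used.
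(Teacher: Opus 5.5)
Your proposal is correct and follows the paper's proof essentially step for step. Statements 3--4 use Lemma~\ref{l6} together with Lemma~\ref{t}; note that the girth-$10$ orders are Lemma~\ref{t}(6), not (5). Statements 1--2 use the same disconnected / not near-cubic / near-cubic split via Lemmas~\ref{greedy}, \ref{l3}, \ref{l1}(2) and \ref{cyclic}.

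The parity step you flag is routine. A deficient clique has at most three vertices, and three would form a triangle component, which connectivity excludes. Odd order forces an odd number of degree-$2$ vertices, so the only possibility is a degree-$1$ vertex adjacent to a degree-$2$ vertex, exactly as the paper asserts.
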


\begin{proof}
We prove the first statement. The lower bound follows from Lemma~\ref{m8}(1). We prove the upper bound. Suppose there exists a subcubic $(n+15)$-vertex $(8,7)$-free graph~$G$. By Lemma~\ref{edge}, we may assume that the deficient vertices of $G$ form a clique. If $G$ is disconnected, then each of its cubic components has girth at most~$7$ by Lemma~\ref{t}(4), hence $G$ is not $(8,7)$-free by Lemma~\ref{greedy}. If $G$ is connected, but not near-cubic, then, since $|V(G)|$ is odd, there exists a degree-1 vertex adjacent to a degree-2 vertex, and any connected $8$-vertex set containing them is an $(8,7^-)$-set. Finally, suppose that $G$ is near-cubic and let $v$ be its only degree-$2$ vertex. Lemma~\ref{cyclic} implies that $v$ is $8$-cyclic, hence there exists a connected $(8,7^-)$-set containing it, a contradiction.

We prove the second statement. The lower bound follows from Lemma~\ref{m8}(2). We now prove the upper bound. Suppose there exists a subcubic $(n+16)$-vertex $(8,8)$-free graph~$G$. By Lemma~\ref{edge}, we may assume that the deficient vertices of $G$ form a clique. If $G$ is disconnected, then each of its cubic components has girth at most $8$ by Lemma~\ref{t}(5), thus it is not $(8,8)$-free by Lemma~\ref{l3}(3); if $G$ is connected and cubic, we obtain the same contradiction. If $G$ is connected, but not near-cubic, then any connected $8$-vertex set that contains all its deficient vertices is an $(8,8^-)$-set. Finally, suppose that $G$ is near-cubic with a degree-$2$ vertex. By Lemma~\ref{t}(5) and Lemma~\ref{l1}(2), either $G$ has girth at most $8$, or its degree-$2$ vertex belongs to a $9$-cycle. Hence $G$ is not $(8,8)$-free, a contradiction.

We prove the third statement. The lower bound follows from Lemma~\ref{m8}(3). By Lemma~\ref{t}(6), for $42 \leq n \leq 52$ and $n \in \{57,58\}$, there is no $(n+17)$-vertex cubic graph with girth at least $10$, hence the upper bound follows from Lemma~\ref{l6}.

The last statement is proved analogously to Theorem~\ref{t3}.
\end{proof}

\subsection{Case $9 \leq m \leq 12$}

\begin{theorem}\label{t9}
$R(K_{1,4},K_{9,9}) = 26$.
\end{theorem}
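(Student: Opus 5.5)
By the Observation, $R(K_{1,4},K_{9,9})$ is the least $N \geq 18$ such that no $N$-vertex subcubic graph is $(9,N-18)$-free. The lower bound $R(K_{1,4},K_{9,9}) > 25$ is then immediate from Lemma~\ref{m912}(1): a $25$-vertex subcubic $(9,7)$-free graph exists. For the upper bound I would show that \emph{no} $26$-vertex subcubic graph $G$ is $(9,8)$-free, and I would follow the same scheme as in Theorems~\ref{t7} and~\ref{t8}. First, by Lemma~\ref{edge}, assume the deficient vertices of $G$ form a clique.

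\emph{Disconnected case.} By Lemma~\ref{t}(5), every cubic graph on at most $26$ vertices has girth at most $8$. Since $26 \leq 4\cdot 9-1$, Lemma~\ref{greedy} with $a=9$ applies and yields a $(9,8^-)$-set.

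\emph{Connected, not cubic.} Since $|V(G)|$ is even and the deficient vertices form a clique, the deficient part is one of the following:
\begin{itemize}
\item two adjacent degree-$2$ vertices;
\item a triangle of degree-$2$ vertices;
\item a configuration containing a degree-$1$ vertex, or deficiency at least $3$ in total.
\end{itemize}
In the latter cases, a connected $9$-vertex set containing all deficient vertices (and the triangle, if present) has $3d_0+2d_1+d_2+2s \geq 3$. Lemma~\ref{l2} with $c=1$ then gives at most $8$ external neighbours. For two adjacent degree-$2$ vertices, the deficiency is only $2$, so I still need one extra unit: either a cycle inside a connected $9$-vertex set containing them ($s\geq 1$), or an outside vertex with two neighbours in the set ($p_2\geq1$). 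I would obtain it by the breadth-first counting of Lemma~\ref{cyclic}, started from the edge joining the two deficient vertices. If no such $9$-set exists, the neighbourhood layers of that edge are tree-like and grow too fast for $26$ vertices. The resulting vertex with several neighbours in the last layer then closes a short cycle, giving a connected $9$-set with $s\geq1$, exactly as in the proof of Theorem~\ref{t7}(1).

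\emph{Connected and cubic.} Here $g(G)\leq 8$ by Lemma~\ref{t}(5), but a connected $9$-set containing one cycle only gives $|N(U)|\leq 9$. So I need a connected $9$-vertex set $U$ with $2s+p_2+2p_3\geq 3$, that is:
\begin{itemize}
\item two independent cycles inside $U$; or
\item a cycle together with an outside vertex having two neighbours in $U$; or
\item an outside vertex having three neighbours in $U$.
\end{itemize}
I expect this to be the main obstacle. My plan is to take a shortest cycle $C$ with $|C|=g\leq 8$ and study the layers $N_1(C), N_2(C),\ldots$. By minimality of $C$, each layer expands almost freely for a few steps. Since only $26-g$ vertices lie outside $C$, the number of edges leaving $N_k[C]$ soon exceeds what the remaining vertices can absorb. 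This forces a vertex in a later layer with two neighbours in a connected set $U\supseteq V(C)$ of size at most $9$, or a second short cycle close to $C$.

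For small $g$ there is slack, and I would enlarge $U$ along the extra edges to reach exactly $9$ vertices. For $g\in\{7,8\}$ the budget is tight: only $1$ or $2$ vertices of $U$ lie outside $C$. In that case I would instead run the argument from a single vertex, as in Lemma~\ref{cyclic}. A cubic graph has $|N_3[v]|=22$ if the ball is a tree, and the $24$ edges leaving $N_3(v)$ go into at most $4$ vertices. So some vertex of $N_4(v)$ has three neighbours in $N_3(v)$, which creates two short cycles sharing a common part. A careful choice of the paths (as in Theorem~\ref{t7}(1)) should then produce a connected $9$-set with $s\geq 2$, or with $s=1$ and $p_2\geq1$, completing the proof.
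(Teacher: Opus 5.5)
Your lower bound, your disconnected case (Lemma~\ref{greedy} needs only girth at most $8$, so Lemma~\ref{t}(5) suffices there), and your case of two adjacent degree-$2$ vertices agree with the paper. Two other steps do not go through.

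\textbf{The degree-$1$ case.} This case is misclassified. In a connected graph of even order whose deficient vertices form a clique, a degree-$1$ vertex $v$ is the \emph{only} deficient vertex. Its neighbour $w$ cannot have degree $2$, since total deficiency $3$ is odd, and it cannot have degree $1$, since then $G=K_2$. Hence $3d_0+2d_1+d_2=2$, not $\geq 3$, and a connected $9$-set through $v$ with $s=p_2=p_3=0$ has exactly $9$ external neighbours. Your other two configurations (a triangle of degree-$2$ vertices, total deficiency at least $3$) cannot occur at all. So you still need a cycle in the $9$-set. The paper gets it from Lemma~\ref{cyclic}: $w$ is $8$-cyclic in the $25$-vertex near-cubic graph $G-v$, since otherwise $|N_3[w]|+|N_4(w)|=31>25$. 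This is easy to repair, but the case split as written is wrong.

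\textbf{The cubic case with $g\in\{7,8\}$.} This is the more serious gap: your plan here fails. If $G[N_3[v]]$ is a tree, the $24$ edges leaving $N_3(v)$ cannot fit into at most $4$ outside vertices at all, since these absorb at most $12$ edges. So the count is an outright contradiction, and it only reproves $g(G)\leq 7$. That is Lemma~\ref{t}(4), which you should cite instead of Lemma~\ref{t}(5).

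For $g=7$, every vertex then lies on a $7$-cycle, its $3$-ball is never a tree, and your argument gives nothing. The device of Theorem~\ref{t7}(1) also does not transfer. There $v$ has degree $2$, so two of the three neighbours of $x$ in $N_3(v)$ lie in the same branch and close a short cycle. Here $v$ has three branches, each cycle through $x$ and $v$ can have $8$ vertices, and two such cycles can span $11$.

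The missing idea is that the cycle-based count you set aside works for $g=7$. Let $C$ be a $7$-cycle; then $N_1(C)$ is an independent $7$-set. If no two of its vertices had a common neighbour, then $|N_2[C]|=28>26$. So some $x,y\in N_1(C)$ share a neighbour $z$. Then $U=V(C)\cup\{x,y\}$ is connected with $s=1$ and $p_2\geq 1$, so $|N(U)|\leq 9+2-3=8$ by Lemma~\ref{l2}. Two extra vertices are enough precisely because $p_2$ supplies the third unit.

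\textbf{Small girth.} Your treatment of $g\leq 6$ is only a heuristic. The paper needs separate arguments for $g=3$, for $g\in\{4,5\}$, and for $g=6$. For example, for $g=6$ it handles the configuration where $x,y\in N_1(C)$ have adjacent neighbours $x',y'\in N_2(C)$ by taking $V(C)\cup\{x,x',y\}$.
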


\begin{proof}
The lower bound follows from Lemma~\ref{m912}(1); we now prove the upper bound. Suppose, to the contrary, that there exists a  $26$-vertex subcubic $(9,8)$-free graph $G$. By Lemma~\ref{edge}, we may assume that the deficient vertices of $G$ form a clique. If, moreover, $G$ is disconnected, then each of its cubic components has girth at most~7 by Lemma~\ref{t}(4), thus it is not $(9,8)$-free by Lemma~\ref{greedy}. We now suppose that $G$ is connected.

\textit{Case~$1.$} $\delta(G) = 1$. Let $v$ be a degree-$1$ vertex of $G$ and $w$ be its only neighbor. Since $|V(G)|$ is even, all other vertices of $G$ have degree 3 by Lemma~\ref{edge}. Lemma~\ref{cyclic} implies that $w$ is $8$-cyclic in $G - v$, thus it is also $8$-cyclic in $G$. Hence there is a connected $9$-vertex set containing $v$ and some cycle, which is a $(9,7^-)$-set by Lemma~\ref{l2}, a contradiction. 

\textit{Case~$2.$} $\delta(G) = 2$. Since $|V(G)|$ is even, $G$ contains adjacent degree-$2$ vertices $u$ and $v$ by Lemma~\ref{edge}. If the subgraph $G[N_3[\{u,v\}]]$ is not acyclic, then it is easy to see that both $u$ and $v$ are $8$-cyclic, and there is a connected $9$-vertex set that contains both of them and a cycle, which is a $(9,7^-)$-set, a contradiction. If $G[N_3[\{u,v\}]]$ is acyclic, then $|N_3[\{u,v\}]| = 16$ and $|N_4(\{u,v\})| \leq 10$, hence there exist two vertices from $N_3(\{u,v\})$ with a common neighbor in $N_4(\{u,v\})$. Then both $u$ and $v$ are $9$-cyclic and we obtain the same contradiction.

\textit{Case~$3.$} $\delta(G) = 3$. Since $G$ is cubic, Lemma~\ref{t}(4) implies that $g(G) \leq 7$. Let $C$ be a shortest cycle in $G$.

\textit{Case~$3.1.$ $|C| = 3$.} Suppose that the subgraph $G[N_3[C]]$ is unicyclic. Then $|N_3[C]| = 24$, $|N_4(C)| \leq 2$ and, furthermore, $N_3(C)$ is a $12$-vertex independent set, which is impossible, since there are at most $6$ edges from $N_3(C)$ to $N_4(C)$. Hence $G[N_3[C]]$ is not unicyclic and there exists a connected $9$-vertex set containing $C$ and some other cycle, which is a $(9,7^-)$-set by Lemma~\ref{l2}, a contradiction.

\textit{Case~$3.2.$ $|C| \in \{4,5\}$.} Note that $|V(G)| < 8 \cdot |C|$. If $G[N_2[C]]$ is unicyclic, then $|N_1(C)| = |C|$, $|N_2(C)| = 2 \cdot |C|$, and thus $|N_3(C)| < 4 \cdot |C|$. Hence there exist two vertices $x,y \in N_2(C)$ with a common neighbor $z \in N_3(C)$. Let $P^x$ ($P^y$) be a $3$-path with one endpoint $x$ ($y$) and the other endpoint in $C$. Then any $9$-vertex connected set containing $V(C) \cup V(P^x) \cup V(P^y)$ is a $(9,8^-)$-set, a contradiction. If $G[N_2[C]]$ is not unicyclic, then there exists a connected $(9,7^-)$-set containing $C$ and some other cycle, a contradiction.

\textit{Case~$3.3.$ $|C| = 6$.} We may assume that $|N_1(C)| = 6$; otherwise, $G$ contains a $5^-$-cycle, which is impossible. If $G[N_2[C]]$ is unicyclic, then $|N_2(C)| = 12$ and $|N_3(C)| \leq 2$, hence $N_2(C)$ is not independent, a contradiction. Otherwise, there exist vertices $x,y \in N_1(C)$ such that one of the following possibilities occurs: $x$ and $y$ are adjacent; $x$ and $y$ have a common neighbor in $N_2(C)$; $x$ and $y$ have adjacent neighbors in $N_2(C)$. For the first two possibilities, any $9$-vertex connected set that contains $V(C) \cup \{x,y\}$ is a $(9,8^-)$-set, a contradiction. For the last possibility, let $x',y' \in N_2(C)$ be the adjacent neighbors of $x$ and $y$, respectively. Then $V(C) \cup \{x,x',y\}$ is a $(9,8^-)$-set, again a contradiction.

\textit{Case~$3.4.$ $|C| = 7$.} We may assume that $N_1(C)$ is an independent 7-vertex set; otherwise, $G$ contains a $6^-$-cycle, which is impossible. If no two vertices from $N_1(C)$ have a common neighbor in $N_2(C)$, then $|N_2(C)| = 14$ and $|N_2[C]| = 28$, a contradiction. Otherwise, there exist vertices $x,y \in N_1(C)$ having a common neighbor $z \in N_2(C)$. Then $V(C) \cup \{x,y\}$ is a connected $(9,8^-)$-set, a contradiction.
\end{proof}

\begin{theorem}\label{t10}
$R(K_{1,4},K_{10,10}) = 29$.
\end{theorem}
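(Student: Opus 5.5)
The lower bound comes from Lemma~\ref{m912}(2). The $28$-vertex $(10,8)$-free graph $H_{28}$ shows that $R(K_{1,4},K_{10,10}) > 28$. For the upper bound, by the Observation we must show that no $29$-vertex subcubic graph is $(10,9)$-free. Suppose, to the contrary, that $G$ is such a graph. By Lemma~\ref{edge}, we may assume that the deficient vertices of $G$ form a clique. I would follow the structure of the proof of Theorem~\ref{t9}, with $a=10$ and target neighborhood size $9$.

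First I dispose of the easy cases.

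\emph{Disconnected $G$.} Each cubic component has at most $27$ vertices, hence girth at most $7$ by Lemma~\ref{t}(4). Since $29 \le 4\cdot 10-1$, Lemma~\ref{greedy} gives a $(10,9^-)$-set.

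\emph{Connected but not near-cubic $G$.} Since $29$ is odd and the deficient vertices form a clique, the degree sum forces a degree-$1$ vertex adjacent to a degree-$2$ vertex, or three mutually adjacent deficient vertices (a triangle). In either case, any connected $10$-vertex set containing all deficient vertices is a $(10,9^-)$-set by Lemma~\ref{l2}: the count $3d_0+2d_1+d_2+2s$ is already at least $3$.

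\emph{Near-cubic $G$ with degree-$2$ vertex $v$.} By Lemma~\ref{l3}(3), any connected set containing $v$ together with a whole cycle is a $(k,k^-)$-set, and Lemma~\ref{l2} in fact gives neighborhood at most $k-1$ (since $d_2=1$ and $s\ge 1$ together subtract $3$). So it suffices to show that $v$ is $10$-cyclic. If not, Lemma~\ref{cyclic}(2) with $k=4$ gives $|N_4[v]| = 31 > 29$, a contradiction. So $v$ is $10$-cyclic, and $G$ has a connected $(10,9^-)$-set.

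The main work is the cubic case. Here $G$ cannot be cubic, because $29$ is odd, so this case is actually vacuous. The real remaining difficulty is therefore only the bookkeeping in the non-near-cubic case. I would check that case carefully:
\begin{itemize}
\item a degree-$0$ vertex gives a drop of $3$ in Lemma~\ref{l2};
\item a degree-$1$ vertex adjacent to a degree-$2$ vertex gives $2+1=3$;
\item three degree-$2$ vertices forming a triangle give $3+2s\ge 5$;
\item an isolated edge or a path component is handled by the disconnected case.
\end{itemize}

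If some configuration yields only a drop of $2$ (for example, a single degree-$1$ vertex whose neighbor has degree $3$, which parity forbids anyway), I would instead extend the set to include a cycle, as in Case~1 of Theorem~\ref{t9}. To do this, I would use Lemma~\ref{cyclic} applied to the neighbor in $G-v$ to show it is $9$-cyclic, since $N_4$ would otherwise exceed the available vertices. The step I expect to need the most care is confirming that the $10$-cyclicity bounds from Lemma~\ref{cyclic} hold within $29$ vertices.
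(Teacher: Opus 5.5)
Your proof is correct and is essentially the paper's own argument. It uses Lemma~\ref{greedy} for disconnected $G$. When $G$ is connected but not near-cubic, it applies Lemma~\ref{l2} to a connected $10$-set containing the deficient vertices; by parity these are a degree-$1$ vertex adjacent to a degree-$2$ vertex. In the near-cubic case it uses Lemma~\ref{cyclic} to show the degree-$2$ vertex is $10$-cyclic. One small slip: a cubic component can have $28$ vertices (alongside an isolated vertex), not at most $27$, though Lemma~\ref{t}(4) still gives girth at most $7$. Your contingency about a ``drop of $2$'' never arises.
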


\begin{proof}
The lower bound follows from Lemma~\ref{m912}(2). We prove the upper bound. Suppose, to the contrary, that there exists a $29$-vertex subcubic $(10,9)$-free graph $G$. By Lemma~\ref{edge}, we may assume that its deficient vertices form a clique. If, moreover, $G$ is disconnected, then all its cubic components have girth at most~8 by Lemma~\ref{t}(5), thus $G$ is not $(10,9)$-free by Lemma~\ref{greedy}. If $G$ is connected, but not near-cubic, then any connected $10$-vertex subset of $G$ that contains all deficient vertices is a $(10,9^-)$-set by Lemma~\ref{l2}. Suppose that $G$ is a connected near-cubic graph with a degree-$2$ vertex~$v$. Lemma~\ref{cyclic} implies that $v$ is $10$-cyclic, thus there exists a connected $(10,9^-)$-set containing $v$ together with some cycle, a contradiction.
\end{proof}

\begin{theorem}\label{t11}
$R(K_{1,4},K_{11,11}) = 31$.
\end{theorem}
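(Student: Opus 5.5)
The lower bound comes from Lemma~\ref{m912}(3): the $30$-vertex graph $H_{30}$ is $(11,8)$-free, so $R(K_{1,4},K_{11,11}) > 30$. For the upper bound I will show that no $31$-vertex subcubic graph is $(11,9)$-free, since $31-11-11=9$. Suppose, to the contrary, that $G$ is such a graph. By Lemma~\ref{edge}, I may assume its deficient vertices form a clique.

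\textbf{Disconnected and non-near-cubic cases.} If $G$ is disconnected, every cubic component has at most $30$ vertices. By Lemma~\ref{t}(5) each such component has girth at most $8 \le a-1 = 10$. Since $31 \le 4\cdot 11-1$, Lemma~\ref{greedy} gives an $(11,10^-)$-set. This is too weak, because I need $9^-$. So I would refine the argument of Lemma~\ref{greedy} using the actual girth bound $\le 8$:
\begin{itemize}
\item a remaining part $Y$ with $r \le 8$ vertices has $|N(Y)| \le r+2 \le 10$, and I need $|N(Y)| \le 9$;
\item I would instead choose $Y$ to contain a cycle, or note that earlier whole components absorbed vertices;
\item components have at least $4$ vertices, so careful counting should work.
\end{itemize}
If $G$ is connected but not near-cubic, then $|V(G)|=31$ is odd, so there are at least two deficient vertices in the clique, for instance a degree-$1$ vertex adjacent to a degree-$2$ vertex. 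A connected $11$-vertex set containing them and a cycle then has at most $9$ external neighbors by Lemma~\ref{l2}. So I need these deficient vertices to be $11$-cyclic, which I would get as in Lemma~\ref{cyclic}: a $2^{k+1}$ growth would exceed $31$ vertices.

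\textbf{Near-cubic case.} Let $v$ be the unique degree-$2$ vertex. A connected $11$-set containing $v$ and one cycle is an $(11,(11+2-1-2)^-)=(11,10^-)$-set. To reach $9$, I need one more saving: either two cycles (cyclomatic number $2$), or one cycle plus a vertex outside $U$ with two neighbors in $U$. Lemma~\ref{cyclic} with $k=4$ says that if $v$ is not $10$-cyclic, then $N_4[v]$ is a $31$-vertex tree. That already uses all vertices, and the leaves $N_4(v)$ would have degree $1$, a contradiction. So $v$ is $10$-cyclic, and the set can be extended by one vertex. The plan is then a case analysis on a shortest cycle $C$ near $v$, in the style of the proof of Theorem~\ref{t9}. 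Expanding $N_k[C \cup \{v\}]$ and counting vertices against $31$ forces collisions: two vertices at the frontier with a common neighbor or an adjacency. This yields a connected $11$-set with cyclomatic number $2$, or with $p_2 \ge 1$, together with $v$, giving $|N(U)| \le 9$.

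\textbf{Cubic case.} $G$ cannot be cubic, since $31$ is odd.

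\textbf{Main obstacle.} The hardest step is the near-cubic case: guaranteeing that a second cycle or a $p_2$-vertex lies within the $11$-vertex budget. This needs a case split on $|C|$ (at most $8$) and its distance from $v$, with counting $|N_s(C)|$ against $31$ as in Cases 3.1–3.4 of Theorem~\ref{t9}. The disconnected case refinement is the secondary difficulty.
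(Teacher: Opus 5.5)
\textbf{Verdict: genuine gap.} Your outer framework is fine: the lower bound from $H_{30}$, the reduction via Lemma~\ref{edge}, the bookkeeping with Lemma~\ref{l2}, and the connected non-near-cubic case (a pendant edge $xy$, with Lemma~\ref{cyclic} applied around the other neighbor of $y$). But the proof stops exactly where the work is.

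In the near-cubic case you only show that $v$ is $10$-cyclic, which gives an $(11,10^-)$-set, and then describe a plan. Two ingredients are missing.

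First, the girth input. Lemma~\ref{t}(5) together with Lemma~\ref{l1}(2) (suppress $v$) gives that $G$ has a cycle of length at most $8$ \emph{or} has girth $9$ with $v$ on a $9$-cycle. Your split on $|C|\le 8$ silently drops the second alternative. The paper treats it separately: two vertices $x,y$ of $N_1(C)$ must share a neighbor, since otherwise $|N_2[C]|=33$, and then $V(C)\cup\{x,y\}$ works because $d_2=1$ and $p_2\ge 1$.

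Second, the counting is not a generic ``collision gives a second cycle''. For $|C|=7$ with $G[N_2[C]]$ unicyclic, the argument is a pure counting contradiction: $|N_2[C]|\ge 25$ forces $|N_3(C)|\le 6$, which is too small to absorb the edges leaving the independent set $N_2(C)$. For $|C|=8$, if no two vertices of $N_1(C)$ share a neighbor, then no connected $11$-vertex set containing $C$ has a second cycle at all. One has to use $|N_2[C]|<32$ to force $v\in N_1[C]$, and then build tailored sets using $d_2=1$ and a $p_2$-vertex. For example, when $v\in C$, take an edge $yz$ inside $N_2(C)$ with neighbors $y',z'\in N_1(C)$ and use the set $V(C)\cup\{y',z,z'\}$. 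Here $y$ is deliberately left outside as the $p_2$-vertex; taking both $y$ and $z$ would need $12$ vertices. None of this is in your proposal, so the main case is unproved.

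The disconnected case also has a hole that your bullet points do not repair. Suppose $G$ consists of a cubic $30$-vertex component and an isolated vertex $w$. Greedy selection takes $w$ and then a connected $10$-vertex set $Y$ with one cycle, and Lemma~\ref{l2} gives only $|N(\{w\}\cup Y)|=10-p_2-2p_3$, which need not be at most $9$.

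The paper proceeds as follows:
\begin{itemize}
\item it uses greedy selection only when the largest component has at most $28$ vertices;
\item it excludes a $29$-vertex largest component by the clique condition;
\item it handles a cubic $30$-vertex largest component inside the same shortest-cycle analysis as the near-cubic case.
\end{itemize}
There, for instance, $|C|=8$ gives $|N_2[C]|=32>30$ unless two vertices of $N_1(C)$ share a neighbor. In that case, $C$ together with the three internal vertices of the resulting path is an $11$-vertex set with two cycles.
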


\begin{proof}
The lower bound follows from Lemma~\ref{m912}(3). We prove the upper bound. Suppose that there exists a $31$-vertex subcubic $(11,9)$-free graph $G$. Observe that every connected $11$-vertex set containing at least two cycles is an $(11,9^-)$-set by Lemma~\ref{l2}.  By Lemma~\ref{edge}, we may assume that the deficient vertices of $G$ form a clique. Let $G'$ be the largest component of $G$. If $G$ is disconnected and $G'$ has at most $28$ vertices, then it is easy to check that any greedy $11$-vertex subset of $G$ is an $(11,9^-)$-set, thus $G$ is not $(11,9)$-free. If $G'$ has $29$ vertices, then $G$ has two components with deficient vertices, a contradiction. Thus we assume that $G'$ has at least $30$ vertices. If $G'$ contains a degree-$1$ vertex $x$ adjacent to a degree-$2$ vertex $y$, then the subgraph $G'[N_5[y]]$ is not acyclic, hence the vertex $y$ is $10$-cyclic and any connected $11$-vertex set containing $x$, $y$ and some cycle is an $(11,8^-)$-set, a contradiction. Otherwise, $G'$ is either a cubic $30$-vertex graph or a near-cubic $31$-vertex graph.  Let $C$ be a shortest cycle in $G'$. Lemma~\ref{t}(5) and Lemma~\ref{l1}(2) imply that either $|C| \leq 8$ or $g(G') = 9$ and the degree-$2$ vertex of $G'$ belongs to a $9$-cycle; in that case let $C$ be such a cycle.

\textit{Case 1.} $|C| \leq 6$. If $|C| \in \{3,4\}$, then, since $|N_4[C]| \leq 31$, either the subgraph $G'[N_3[C]]$ is not unicyclic or two vertices from $N_3(C)$ have a common neighbor in $N_4(C)$. Likewise, if $|C| \in \{5,6\}$, either the subgraph $G'[N_2[C]]$ is not unicyclic or two vertices from $N_2(C)$ have a common neighbor in $N_3(C)$. In both cases there exists a connected $11$-vertex set containing $C$ and some other cycle, a contradiction.

\textit{Case 2.} $|C| = 7$. If $G'[N_2[C]]$ is not unicyclic, then there exists a connected $11$-vertex set containing $C$ and another cycle, a contradiction. Otherwise, we have $|N_2[C]| \geq 25$ and $|N_2(C)| \geq 12$ (the equalities hold when $C$ has a degree-$2$ vertex). Thus $|N_3(C)| \leq 6$ and, moreover, every vertex from $N_2(C)$, except at most one, has two neighbors in $N_3(C)$, which contradicts the fact that $G'$ is subcubic.

\textit{Case 3. $|C| = 8$.} Since $g(G') = 8$, the set $N_1(C)$ is independent. If there exist two vertices $u,v \in N_1(C)$ with a common neighbor $w \in N_2(C)$, then $V(C) \cup \{u,v,w\}$ is an $(11,9^-)$-set, a contradiction. Otherwise the inequality $|N_2[C]| < 32$ implies that the set $N_1[C]$ contains a degree-2 vertex $x$. Consider two subcases.

\textit{Case 3.1.} $x \in C$. Then $|N_2[C]| = 29$ and $|N_3(C)| \leq 2$, hence there exist adjacent vertices $y,z \in N_2(C)$. Let $y'$ and $z'$ be their respective neighbors in $N_1(C)$. Then $V(C) \cup \{y',z,z'\}$ is an $(11,9^-)$-set by Lemma~\ref{l2}, a contradiction.

\textit{Case 3.2.} $x \in N_1(C)$. Then $N_2[C] = V(G')$. Consider a vertex $y \in N_2(C) \cap N_1(x)$ (it exists, because $N_1(C)$ is independent). Since $N_3(C) = \emptyset$ and $y$ has no neighbors in $N_1(C)$ other than $x$, there exist vertices $y' \in N_2(C) \cap N_1(y)$ and $x' \in N_1(C) \cap N_1(y')$. Then $V(C) \cup \{x,x',y'\}$ is an $(11,9^-)$-set by Lemma~\ref{l2}, a contradiction.

\textit{Case 4. $|C| = 9$.} Since $g(G') = 9$, the set $N_1(C)$ is independent. If two vertices $x,y \in N_1(C)$ have a common neighbor $z \in N_2(C)$, then $V(C) \cup \{x,y\}$ is an $(11,9^-)$-set by Lemma~\ref{l2}, since $C$ contains a degree-$2$ vertex. Otherwise, $|N_2[C]| = 33$, a contradiction.
\end{proof}

\begin{theorem}\label{t12}
$R(K_{1,4},K_{12,12}) = 33$.
\end{theorem}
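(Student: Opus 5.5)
The lower bound comes from Lemma~\ref{m912}(4). The $32$-vertex $(12,8)$-free graph $H_{32}$ gives $R(K_{1,4},K_{12,12}) > 32$, since $32 - 12 - 12 = 8$.

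For the upper bound I must show that no $33$-vertex subcubic graph $G$ is $(12,9)$-free, because $33-24=9$. Suppose such a $G$ exists. By Lemma~\ref{edge} we may assume its deficient vertices form a clique. By Lemma~\ref{l2}, a connected $12$-vertex set with cyclomatic number $s$ and $d$ deficiency-degree contribution has at most $14-2s-d$ external neighbours. So it suffices to find a connected $12$-vertex set that does one of the following: contains two independent cycles; contains one cycle and a deficient vertex; or contains one cycle and has a vertex outside it with two neighbours inside.

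I will run the same case structure as in Theorems~\ref{t9}--\ref{t11}.

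\emph{Disconnected $G$.} Consider the cubic components. A cubic component of girth at least $9$ needs at least $58$ vertices, by Lemma~\ref{t}(5), so every cubic component has girth at most $8$. If the largest component has at most about $30$ vertices, I check that a greedy $12$-vertex subset is a $(12,9^-)$-set, as in the argument of Lemma~\ref{greedy}; I expect this to be a small arithmetic check, as in Theorem~\ref{t11}. If the largest component has $31$ or $32$ vertices, the remaining one or two vertices form a deficient clique. This is contradictory, or it forces two components with deficient vertices, which the clique condition forbids.

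\emph{Connected, not near-cubic $G$.} Since $33$ is odd, either there is a degree-$1$ vertex adjacent to a degree-$2$ vertex, or there are three mutually adjacent deficient vertices (a triangle). The triangle case is immediate. In the degree-$1$ case, Lemma~\ref{cyclic} on the $32$-vertex remainder shows that the vertex is $11$-cyclic. Adding the leaf gives a connected $12$-set with a cycle and $d\ge 3$, hence at most $9$ external neighbours.

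\emph{Near-cubic $G$ with degree-$2$ vertex $v$.} Apply Lemma~\ref{cyclic}(2) with $k=4$. If $v$ is not $10$-cyclic, then $N_4[v]$ is a $31$-vertex tree and $|N_5(v)|=32$, which is impossible. So $v$ is $10$-cyclic, hence $12$-cyclic. Then a connected $12$-set containing $v$ and a cycle has $s\ge1$ and $d\ge1$, so it has at most $11$ external neighbours. That is not enough, and this is where the real work lies.

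I would therefore split by a shortest cycle $C$, in the style of Theorem~\ref{t11}. For $|C|\le 6$, counting $|N_j[C]|\le 33$ forces either a second cycle near $C$ or a collision in the next layer. Either way we get a connected $12$-set containing two cycles, so $s\ge 2$ and at most $10$ external neighbours. I then enlarge it to pick up $v$ or an extra collision to reach $9$. For $|C|\in\{7,8\}$, the layers $N_1(C),N_2(C)$ have sizes $|C|$ and $2|C|$ minus deficiencies, so the total is at most $33$ and collisions are forced in $N_2$. This gives sets like $V(C)\cup\{x,y,z,\dots\}$, with $p_2\ge1$ plus another collision or a deficient vertex. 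The case $|C|\ge 9$ is impossible by Lemma~\ref{l1}(2) unless $|C|=9$ and $v\in C$, handled as in Case~4 of Theorem~\ref{t11}.

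The main obstacle is that with $12$ vertices and target $9$ one needs a total saving of $5$ in Lemma~\ref{l2}, rather than the $3$ or $4$ used before. This will require combining two structural features (two cycles plus a degree-$2$ vertex, or a cycle plus two common neighbours) within $12$ vertices. The $|C|\in\{7,8\}$ cases in the near-cubic graph look hardest. My first attempt there will be to carefully count edges between $N_2(C)$ and $N_3(C)$ to force two disjoint collisions close to $C$.
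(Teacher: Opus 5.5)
The connected near-cubic case, which carries essentially all of the difficulty, is not proved, and the plan you sketch for it does not work as stated. (The three ``sufficient'' conditions you list at the start are not sufficient: by Lemma~\ref{l2}, two cycles give only $14-4=10$, and one cycle plus a deficient vertex or a $p_2$-vertex gives only $11$. You need a saving of $5$, as you note later.) Organising the search around a shortest cycle $C$ gives you no control over where $v$ is. If $C$ is far from $v$, a $12$-set containing $C$ and a second cycle may have $10$ external neighbours, and it cannot be ``enlarged'' to pick up $v$, because it already has $12$ vertices. Your layer count for $|C|\in\{7,8\}$ ($7+7+14=28$, $8+8+16=32$) does not exceed $33$, so by itself it forces no collision. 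For $|C|=7$, even a forced edge inside $N_2(C)$ only yields an $11$-vertex set with $s=2$, i.e.\ $10$ after extension, unless $v$ happens to be near $C$.

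The missing idea is to centre the analysis at $v$ and split by the least $k$ for which $v$ is $k$-cyclic. If $v$ is $5$-cyclic, some $5^-$-cycle $C$ lies within distance $2$ of $v$. Layer counting around $C$ with $|V(G)|=33$ then gives a connected $12$-set containing $v$ and two cycles, hence $14-1-4=9$. Otherwise, after a $6$-cycle through $v$ is handled the same way, $|N_3[v]|=15$. If $|N_4(v)|\le 13$, edges inside $N_3(v)$ or common neighbours in $N_4(v)$ of vertices of $N_3(v)$ give a $12$-set containing $v$ and two cycles. So $|N_4(v)|\ge 14$ and $|N_5(v)|\le 4$. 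Then some $x\in N_4(v)$ has no neighbour in $N_5(v)$, and $N_1[\{v,x\}]$ together with shortest paths from $N(x)$ to $N(v)$ has at most $12$ vertices, with $d_2=1$ and $s\ge 2$.

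There is a second gap in the disconnected case. A cubic $32$-vertex component plus an isolated vertex is not contradictory: a single deficient vertex is a clique, so Lemma~\ref{edge} allows it. You must show that this component, which has girth at most $7$ by Lemma~\ref{t}(4), contains a connected $11$-vertex set with cyclomatic number at least $2$. Together with the isolated vertex this gives $12+4-(3+4)=9$ by Lemma~\ref{l2}.

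There are also two smaller points. In the degree-$1$ case, Lemma~\ref{cyclic} does not apply to $G-x$, which is not near-cubic. Apply it with $k=4$ to the $31$-vertex near-cubic graph $G-\{x,y\}$ at the other neighbour $z$ of $y$; your conclusion then holds. Also, Lemma~\ref{t}(4) with Lemma~\ref{l1}(2) already gives $g(G)\le 8$, with $v$ on an $8$-cycle when $g(G)=8$, so $|C|=9$ never arises. Had it arisen, Case~4 of Theorem~\ref{t11} would not transfer, because $|N_2[C]|=9+8+16=33$ is no contradiction for a $33$-vertex graph.
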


\begin{proof}
The lower bound follows from Lemma~\ref{m912}(4). We prove the upper bound. Suppose there exists a $33$-vertex subcubic $(12,9)$-free graph $G$. Observe that every connected $12$-vertex set of $G$ containing at least two distinct cycles and a degree-$2$ vertex is a $(12,9^-)$-set by Lemma~\ref{l2}.  By Lemma~\ref{edge}, we may assume that the deficient vertices of $G$ form a clique. First, we prove the following fact.

\textit{Claim~$1$.} $G$ is a connected near-cubic graph. Suppose not; we consider two cases.

\textit{Case~$1$.} $G$ is disconnected. Let $G'$ be its largest component. If $|V(G')| \leq 30$, then it is easy to see that any greedy $12$-vertex subset of $G$ is a $(12,9^-)$-set, thus $G$ is not $(12,9)$-free. If $|V(G')| = 31$, then $G$ has two different components with deficient vertices; that contradicts Lemma~\ref{edge}. 
 Otherwise, $|V(G')| = 32$ and $G'$ is cubic. We show that $G'$ contains a connected $11$-vertex subset that contains at least two cycles; together with the isolated vertex of $G$ this set becomes a $(12,9^-)$-set by Lemma~\ref{l2}. Let $C$ be a shortest cycle in $G'$. Lemma~\ref{t}(4) implies that $|C| \leq 7$. If $|C| \in \{3,4,5,7\}$, then, for $|C| = 2q + r$, $r \in \{0,1\}$, one can easily check that the subgraph $G'[N_{5-q}[C]]$ is not unicyclic. Thus there exists an $11$-vertex connected set of $G'$ that contains $C$ and another cycle, as required. If $|C| = 6$, then $|N_3(C)| < 24$, hence it is possible to add at most $5$ vertices to $C$ and obtain a connected $11$-vertex set with another cycle, as required.

\textit{Case~$2$.} $G$ is connected, but not near-cubic. Since $|V(G)|$ is odd, we may assume, by Lemma~\ref{edge}, that $G$ has a degree-1 vertex $x$ adjacent to a degree-2 vertex $y$, while the remaining vertices have degree $3$. Let $z$ be the other neighbor of $y$. By Lemma~\ref{cyclic}, $z$ is $10$-cyclic in the connected $31$-vertex near-cubic subgraph $G - \{x,y\}$, hence $G$ has a connected $12$-vertex set that contains $x,y$ and a cycle, which is a $(12,9^-)$-set by Lemma~\ref{l2}. This contradiction completes the proof of Claim~1.

We now continue the proof of Theorem~\ref{t12}. Let $v$ be the only degree-2 vertex of the near-cubic graph $G$. 

\textit{Case~$1$.} $v$ is 3-cyclic. Then $v$ belongs to some triangle $C$. The subgraph $G[N_4[C]]$ cannot be unicyclic, hence there exists a $12$-vertex connected set containing $C$ and another cycle, which is a $(12,9^-)$-set.

\textit{Case~$2$.} $v$ is 4-cyclic, but not 3-cyclic.

\textit{Case~$2.1$.} $v$ belongs to a $4$-cycle $C$. The subgraph $G[N_4[C]]$ cannot be unicyclic, hence there exists a $12$-vertex connected set containing $C$ and another cycle, which is a $(12,9^-)$-set.

\textit{Case~$2.2$.} There exists a triangle $C$ such that $v \in N_1(C)$. The subgraph $G[N_4[C]]$ cannot be unicyclic, hence there exists a connected $11$-vertex set $U$ containing $C$ and some other cycle. Then either $U \cup \{v\}$ is a $(12,9^-)$-set or it can be extended to a $(12,9^-)$-set, a contradiction.

\textit{Case~$3.$} $v$ is 5-cyclic, but not 4-cyclic.

\textit{Case~$3.1.$} $v$ belongs to a $5$-cycle $C$. The subgraph $G[N_3[C]]$ cannot be unicyclic, hence there exists a $12$-vertex connected set containing $C$ and another cycle, which is a $(12,9^-)$-set.

\textit{Case~$3.2.$} $v$ is adjacent to a vertex of a 4-cycle $C$. If $G[N_3[C]]$ is not unicyclic, there exists a $(12,9^-)$-set containing $v$ and two cycles, including $C$. If $G[N_3[C]]$ is unicyclic, then $|N_3(C)| = 14$ and $|N_4(C)| \leq 4$, which is not possible, since $N_3(C)$ is independent and each of its vertices has two neighbors in $N_4(C)$.

\textit{Case~$3.3.$} There exists a triangle $C$ such that $v \in N_2(C)$. Let $v'$ be the neighbor of $v$ in $N_1(C)$. If $G[N_3[C]]$ is not unicyclic, there exists a $(12,9^-)$-set containing both $v,v'$ and two cycles including $C$. Otherwise, $|N_3[C]| = 23$ and $|N_4(C)| \leq 10$, hence there exist vertices $x,y \in N_3(C)$ with a common neighbor $z \in N_4(C)$. Let $P^x$ ($P^y$) be a $4$-path with one endpoint $x$ ($y$) and the other endpoint in $C$. Then any $12$-vertex connected set containing $V(C) \cup V(P^x) \cup V(P^y) \cup \{v,v',x,y,z\}$ is a $(12,9^-)$-set, a contradiction.


\textit{Case~$4.$} $v$ is not 5-cyclic. If $v$ belongs to a $6$-cycle $C$, then $G[N_3[C]]$ is not unicyclic, hence there exists a connected $(12,9^-)$-set containing $C$ and another cycle, a contradiction. Otherwise $|N_3[v]| = 15$. We now show that $|N_4(v)| \geq 14$.

Suppose that $|N_4(v)| \leq 13$. If $G[N_3(v)]$ contains at least two edges, $x_1y_1$ and $x_2y_2$ (not necessarily disjoint), then any connected $12$-vertex set containing $N_2[v] \cup \{x_1,y_1,x_2,y_2\}$ is a $(12,9^-)$-set, as it contains two distinct cycles. If $G[N_3(v)]$ contains exactly one edge $xy$, then there exist (not necessarily distinct from $x$ and $y$) vertices $w',w'' \in N_3(v)$ with a common neighbor $w''' \in N_4(v)$. Any connected $12$-vertex set containing $N_2[v] \cup \{x,y,w',w'',w'''\}$ is a $(12,9^-)$-set. Finally, if $G[N_3(v)]$ is empty, then there exist three pairs of vertices, $x_i,y_i \in N_3(v)$ with a common neighbor $z_i \in N_4(v)$. If, say, $x_1 = x_2$, then the set $N_2[v] \cup \{x_1,y_1,y_2,z_1,z_2\}$ contains two distinct cycles and thus every connected $12$-vertex set containing it is a $(12,9^-)$-set. If all six vertices $x_i$ and $y_i$ are distinct, then there exist two of them (by symmetry, $x_1$ and $x_2$, or $x_1$ and $y_1$) with a common neighbor in $N_2(v)$. Hence there exists a vertex $z \in N_2(v)$ such that $\{x_1,x_2,y_1,y_2\} \cap N(z) = \emptyset$. Then $(N_2[v] \setminus \{z\}) \cup \{x_1,y_1,x_2,y_2,z_1,z_2\}$ is a $(12,9^-)$-set, a contradiction. 

Therefore, $|N_4(v)| \geq 14$ and $|N_5(v)| \leq 4$. Thus there exists a vertex $x \in N_4(v)$ with no neighbors in $N_5(v)$. Construct a set $U$ containing a $7$-vertex set $N_1[\{v,x\}]$ and a shortest path from every vertex of $N_1(x)$ to $N_1(v)$. Since at least one vertex from $N_1(x)$ belongs to $N_3(v)$, these shortest paths have at most $5$ total vertices not from $N_1[\{v,x\}]$, thus $U$ contains at most $12$ vertices and any connected $12$-vertex set containing $U$ is a $(12,9^-)$-set by Lemma~\ref{l2} (take $d_2 = 1$ and $s \geq 2$). Hence $G$ is not $(12,9)$-free, a final contradiction.

\end{proof}

\section*{Statements and declarations}

\subsection*{Data availability} The graph encodings used to establish the computational lower bounds are provided in Table~\ref{table1} in \verb|sparse6| format.

\subsection*{Use of artificial intelligence} The authors used AI tools to obtain the witness graphs for the lower bounds, simplify the proofs and to improve the readability of the text. The authors take full responsibility for the accuracy and originality of the manuscript.



\begin{thebibliography}{99}

\bibitem{Radz}
S.~P. Radziszowski,
\emph{Small Ramsey numbers},
Electron. J. Combin., Dynamic Survey DS1, revision \#18 (2026).

\bibitem{Par}
T.~D. Parsons,
\emph{Ramsey graphs and block designs, I},
Trans. Amer. Math. Soc. \textbf{209} (1975), 33--44.

\bibitem{Zhang}
X.~Zhang, Y.~Chen, and T.~C. Edwin Cheng,
\emph{Some values of Ramsey numbers for $C_4$ versus stars},
Finite Fields Appl. \textbf{45} (2017), 73--85.

\bibitem{Boza}
L.~Boza,
\emph{Exact values and bounds for Ramsey numbers of $C_4$ versus a star graph},
\url{https://arxiv.org/abs/2409.12770} (2024)

\bibitem{BFRS}
S.~A. Burr, R.~J. Faudree, C.~C. Rousseau, and R.~H. Schelp,
\emph{On Ramsey numbers involving starlike multipartite graphs},
J. Graph Theory \textbf{7} (1983), 395--409.

\bibitem{Stev}
S.~Stevens,
\emph{Ramsey numbers for stars versus complete multipartite graphs},
Congr. Numer. \textbf{73} (1990), 63--71.

\bibitem{Eze}
L.~C. Eze, R.~Jajcay, T.~Jajcayov\'a, and D.~Z\'avack\'a,
\emph{Theoretical and computational approaches to determining sets of orders for $(k,g)$-graphs},
Discrete Appl. Math. \textbf{380} (2026), 572--587.

\bibitem{Har1}
F.~Harary,
\emph{Recent results on generalized Ramsey theory for graphs},
in: Graph Theory and Applications (Y.~Alavi et al., eds.),
Lecture Notes in Math., vol.~303, Springer, Berlin, 1972, pp.~125--138.

\bibitem{LoM4}
R.~Lortz and I.~Mengersen,
\emph{Further Ramsey numbers for small complete bipartite graphs},
Ars Combin. \textbf{79} (2006), 195--203.

\bibitem{Shao}
Z.~Shao, J.~Xu, Q.~Bao, and L.~Pan,
\emph{Computation of some generalized Ramsey numbers},
J. Combin. Math. Combin. Comput. \textbf{75} (2010), 217--228.

\end{thebibliography}
\end{document}